\def\tsc#1{\csdef{#1}{\textsc{\lowercase{#1}}\xspace}}
\newtheorem{theorem}{Theorem}
\newtheorem{lemma}[theorem]{Lemma}
\newtheorem{proposition}[theorem]{Proposition}
\newtheorem{note}[theorem]{Note}
\newdefinition{remark}[theorem]{Remark}
\newtheorem{example}{Example}
\begin{document}
	\let\WriteBookmarks\relax
	\def\floatpagepagefraction{1}
	\def\textpagefraction{.001}
	
	\shorttitle{A superconvergent approximation to Uryshon integral equations}    
	
	\shortauthors{S.K. Shukla and G. Rakshit}  
	
	\title [mode = title]{Acceleration of convergence in approximate solutions of Urysohn integral equations with Green's kernels}  
	
	
	
	%
	
	\author[]{Shashank K. Shukla}[orcid=0009-0008-1345-6097]

	
	\ead{shashankks@rgipt.ac.in}
	
	

	\author[]{Gobinda Rakshit}[orcid=0000-0002-5813-4656]

    \affiliation[]{organization={Department of Mathematical Sciences, Rajiv Gandhi Institute of Petroleum Technology},
		addressline={Jais}, 
		city= {Amethi},
		postcode={229304}, 
		state={Uttar Pradesh},
		country={India}}
	
	\cormark[1]
	
	
	\ead{g.rakshit@rgipt.ac.in}
	
	
	
	\cortext[1]{Corresponding author}
	
	
	
	\begin{abstract}
		Consider a non-linear operator equation $x - K(x) = f$, where $f$ is a given function and $K$ is a Urysohn integral operator with Green's function type kernel defined on $L^\infty [0, 1]$.  We apply approximation methods based on interpolatory projections onto the approximating space  $\mathcal{X}_n$, which is the space of piecewise polynomials of even degree with respect to a uniform partition of $[0, 1]$. The approximate solutions obtained from these methods demonstrate enhanced accuracy compared to the classical collocation solution for the same equation. Numerical examples are given to support our theoretical results.
	\end{abstract}
	
	

	\begin{keywords}
		Urysohn integral operator \sep Green's kernel \sep Interpolatory operator \sep Collocation points
		
	\MSC[2008] 45G10, 45L05, 65D05, 65J15, 65R20
	\end{keywords}
 
	\maketitle
	
\section{Introduction}

Let $\mathcal{X} = L^\infty[0, 1]$ and $\Omega = [0, 1] \times [0, 1] \times \mathbb{R}$. Let $\kappa : \Omega \mapsto \mathbb{R}$ be a continuous function.  Consider the non-linear integral operator
$K : L^\infty [0,1] \mapsto C[0,1]$ as
\begin{equation} \label{Eq: 01}
	K(x)(s) = \int_{0}^{1} \kappa(s, t, x(t))~dt, \quad x \in \mathcal{X}, \quad s \in [0, 1]. 
\end{equation}
Such an operator is called as Urysohn integral operator.  Since the kernel $\kappa$ is continuous, $K$ is compact operator on $\mathcal{X}$. For given $f$ and $\kappa$, we consider the following Urysohn integral equation
\begin{equation} \label{Eq: 02}
	x - K(x) = f. 
\end{equation}
The Fr\'echet derivative of $K$ at $x \in \mathcal{X}$ is given by 
\begin{equation*} 
	(K'(x)y)(s) = \int_{0}^{1} \frac{\partial \kappa}{\partial u}(s, t, x(t)) y(t) ~dt, \quad y \in \mathcal{X}, \quad s \in [0, 1],
\end{equation*}
where $\frac{\partial \kappa}{\partial u}(s, t, x(t))$ represents the partial derivative of $\kappa$ with respect to its third component $u$ at $x(t)$. Assuming that the equation \eqref{Eq: 02} possesses a solution $\varphi$ and, $1$ is not an eigenvalue of the compact linear operator $K'(\varphi)$. Then, $\varphi$ is an isolated solution of \eqref{Eq: 02}. See \cite{MAK1, MAK3}. Note that, $\varphi \in C^ \alpha [0,1]$ whenever $f \in C^ \alpha [0,1]$ for any positive integer $\alpha$. (See \cite[Corollory 3.2]{KEA1}). Solving the equation \eqref{Eq: 02} exactly, is often challenging, so we focus on finding approximate solutions. In collocation method, we employ a sequence of finite-rank interplatory projections that converge pointwise to the identity operator to approximate the operator $K$. This method has been extensively studied in \cite{MAK1, MAK3, MAK2}.

Let $\mathcal{X}_n$ denotes a sequence of finite-dimensional subspaces approximating $\mathcal{X}$, and let $\left\{ Q_n \right\}$ represents a sequence of interpolory projections from $\mathcal{X}$  to $\mathcal{X}_n$. In the collocation method, equation \eqref{Eq: 02} is approximated by
\begin{equation*} 
	\varphi_{n}^C - Q_nK(\varphi_{n}^C) = Q_nf.
\end{equation*}
For linear integral equations, Sloan \cite{Sl1} introduced the iterated collocation solution, obtained through a single iteration of the collocation solution. This approach was further opted by Atkinson-Potra \cite{KEA1} for Urysohn integral equations. The iterated collocation solution is defined by 
\begin{equation*}
	\varphi_{n}^S = K(\varphi_{n}^C) + f.
\end{equation*}
Additionally, there exists an enhanced approach known as the modified collocation method, proposed by Grammont-Kulkarni \cite{LGRPK1} as
\begin{equation*} 
	\varphi_{n}^M - K_n^M(\varphi_{n}^M) = f, \quad \mbox{ with }~  K_n^M(x) = Q_nK(x) + K(Q_nx) - Q_nK(Q_nx).
\end{equation*}
This approach generalizes the modified collocation method for the linear case, as proposed by Kulkarni \cite{RPK1}. The solution obtained by applying a single iteration in $\varphi_{n}^M$ is termed the iterated modified collocation solution which is defined by
\begin{equation*} 
	\widetilde{\varphi}_n^M=K(\varphi_n^M)+f. 
\end{equation*} 

For positive integer $r \geq 0$, let $\mathcal{X}_n$ be the space of piecewise polynomials of degree $ \le 2r$ associated with the uniform partition of $[0,1]$ defined as $\{0 = t_0 < t_1 < \cdots < t_n = 1\}$, where $t_j = \frac{j}{n}$ for $j = 1,2, \ldots, n$ with subinterval length $\displaystyle{h = \frac{1}{n}}$. In each subinterval $[t_{j-1}, t_j]$, choose $2r+1$ collocation points as Gauss points (i.e. zeros of the Legendre polynomials), and let $Q_n: \mathcal{X} \mapsto \mathcal{X}_n$ be the interpolatory projection at the collocation points. Then, the following orders of convergence are proved in  Atkinson-Potra \cite{KEA1} for sufficiently smooth kernel of $K$:
\begin{equation*}
	\norm{\varphi - \varphi_{n}^C}_\infty = O\left(h^{2r +1}\right),  \quad \norm{\varphi - \varphi_{n}^S}_\infty = O\left(h^{4r +2}\right).
\end{equation*}
The following convergence rates are established in Grammont et al. \cite{LGRPK2}:
\begin{equation*}
	\norm{\varphi - \varphi_n^M }_\infty = O(h^{6r + 3}), \quad \norm{\varphi - \widetilde{\varphi}_n^M }_\infty = O(h^{8r + 4}).
\end{equation*}
In the case of interpolatory projection with collocation points which are not necessarily Gauss points, but $2r +1$ distinct special points in each subinterval as collocation points, and sufficiently smooth kernels, Grammont et al \cite{LGRPK2} proved that the above modified projection solutions (both) are of the order $h^{4r + 2}$. We observe that selecting Gauss points as collocation points improves the convergence orders through iterative refinement. 

In Rakshit et al \cite{GR-SKS-ASR}, we analyzed the orders of convergence for the approximate solutions of Fredholm integral equations with Green's kernel obtained by the collocation method, iterated collocation method and its modified variants. In the same paper, we considered $\mathcal{X}_n$ as the piecewise polynomial space of degree $\leq 2r$ ($r \geq 1$) with respect to the same uniform partition, and the projections as interpolatory operator at $2r+1$ midpoints of each subinterval $[t_{j-1}, t_j]$ for $j=1, 2, \ldots, n$. Then, we obtain the orders of convergence of collocation, iterated collocation, modified collocation and iterated modified collocation solutions as $h^{2r+1}, h^{2r+2}, h^{2r+2}$ and $h^{2r+3}$ respectively.

In this paper, we consider Urysohn integral equations with smooth and non-smooth kernels, and the projection $Q_n : \mathcal{X} \mapsto \mathcal{X}_n$ defined as interpolation at $2r+1$ equidistant points (including boundary points) in each subinterval $[t_{j-1}, t_j]$ for $j=1, 2, \ldots, n$. Next, the approximate solutions of \eqref{Eq: 02} using collocation methods and their variants are derived, and their orders of convergence are analyzed.

In many recent studies , such as \cite{ RPKTJN, RPKGRD1, NNPDGN,  RNNNSC},  improved convergence is typically achieved by using collocation points that are zeros of special functions. These choices are known to enhance the accuracy of numerical integration and are commonly used in classical iterated collocation methods.
In contrast, our method achieves higher-order convergence by using even-degree piecewise polynomial spaces with equidistant collocation points, which are not necessarily tied with zeros of special functions. This simpler setup still delivers strong performance, both theoretically and numerically, demonstrating the flexibility and effectiveness of our approach.

This article is organized as follows. Subsection $2.1$ presents the assumptions concerning the kernel of the Urysohn integral operator, preliminary results, and notations. Subsections $2.2$ and $2.3$ focus on interpolatory projection and few important lemmas using divided difference. Section $3$ discusses the methods of approximation including significant proofs and main theorems. Subsections $3.1$, $3.2$, and $3.3$ establish the orders of convergence for collocation and iterated collocation, modified collocation, and iterated modified collocation methods, respectively. Section $4$ provides numerical results, and Section $5$ draws a conclusion of this paper.
\section{Preliminaries}

For an integer $\alpha \geq 0$, let $C^\alpha[0,1]$ represent the space of all real-valued functions on  $[0,1]$ which are \( \alpha \)-times continuously differentiable, equipped with the norm
$$
\norm{x}_{\alpha,\infty} = \max_{0 \leq i \leq \alpha} \norm{x^{(i)}}_{\infty}  = \max_{0 \leq i \leq \alpha} \left\{ \sup_{0 \leq t \leq 1} |x^{(i)}(t)| \right\}.
$$
where $x^{(i)}$ is the $i$-th derivative of the function $x$.
\subsection{Green's function type kernel}

Consider the Urysohn integral operator \eqref{Eq: 01} as
\[
K(x)(s) = \int_0^1 \kappa(s, t, x(t)) \, dt, \quad s \in [0, 1], \quad x \in \mathcal{X},
\]
where the kernel $\kappa\in C(\Omega)$. 
Let $\alpha \ge 1$ be an integer. We assume that the kernel $\kappa$ of $K$ defined in equation \eqref{Eq: 01} satisfies the following properties:

\begin{enumerate}
	\item For $i=1,2,3,4$, the functions $\kappa, ~\frac{\partial^{i} \kappa}{\partial u^{i}} \in C(\Omega).$
	\item Divide $\Omega$ into two parts as $\Omega_1 = \{(s, t, u) : 0 \leq t \leq s \leq 1, u \in \mathbb{R}\}$
	and $\Omega_2 = \{(s, t, u) : 0 \leq s \leq t \leq 1, u \in \mathbb{R}\}$. There are two functions $\kappa_i \in C^\alpha(\Omega_i)$, $i = 1, 2$, such that
	$$
	\kappa(s, t, u) = 
	\begin{cases} 
		\kappa_1(s, t, u) & \text{if } (s, t, u) \in \Omega_1, \\
		\kappa_2(s, t, u) & \text{if } (s, t, u) \in \Omega_2.
	\end{cases}
	$$
	\item Let
	$$
	\ell(s, t, u) = \frac{\partial \kappa}{\partial u}(s, t, u), \quad m(s, t, u) = \frac{\partial^2 \kappa}{\partial u^2}(s, t, u), \quad \text{for all} \; (s,t,u) \in \Omega.
	$$
	Clearly, $\ell$, $m \in C(\Omega)$. The partial derivatives of $\ell(s,t,u)$ and $m(s,t,u)$ with respect to $s$ and $t$ exhibit jump discontinuities along the line $s = t$.
	
	\item There exist functions $\ell_i$, $m_i \in C^\beta(\Omega_i)$, $i = 1, 2$, such that
	$$
	\ell(s, t, u) = 
	\begin{cases} 
		\ell_1(s, t, u) & \text{if } (s, t, u) \in \Omega_1, \\
		\ell_2(s, t, u) & \text{if } (s, t, u) \in \Omega_2
	\end{cases}
	$$
	and
	$$
	m(s, t, u) = 
	\begin{cases} 
		m_1(s, t, u) & \text{if } (s, t, u) \in \Omega_1, \\
		m_2(s, t, u) & \text{if } (s, t, u) \in \Omega_2.
	\end{cases}
	$$
\end{enumerate}

Since $\frac{\partial^{i} \kappa}{\partial u^{i}} \in C(\Omega)$ for $i = 1, 2, 3, 4$, the operator $K$ being four times Fr\'echet differentiable at $x \in \mathcal{X}$ is expressed as
\begin{equation*} 
	K^{(i)}(x)( y_1,\ldots , y_i)(s) = \int_0^1 \frac{\partial^{i}\kappa}{\partial u^{i}} (s, t, x(t)) y_1(t) \cdots y_i(t) ~ dt, \quad s \in [0, 1],
\end{equation*}
where 
$$
\frac{\partial^i \kappa}{\partial u^i}(s, t, x(t)) = \left.\frac{\partial^i \kappa}{\partial u^i}(s, t, u) \right|_{u = x(t)}, \quad i = 1, 2, 3, 4,
$$
and $y_1, y_2, y_3, y_4 \in \mathcal{X}$. Note that $K'(x) : \mathcal{X} \mapsto \mathcal{X}$ is a compact linear operator, while $K^{(i)}(x) : \mathcal{X}^i \mapsto \mathcal{X}$ are multilinear operators, where $\mathcal{X}^i$ represents the Cartesian product of $i$ copies of $\mathcal{X}$ (see Rall \cite{RALL}). The norms of these operators are given by
$$
\norm{K^{(i)}(x)} = \sup \left\{ \norm{K^{(i)}(x)(y_1, \dots, y_i)}_\infty : \norm{y_j}_\infty \leq 1, \, j = 1, \dots, i \right\}
$$
for $i = 1, 2, 3, 4$. It follows that
$$
\norm{K^{(i)}(x)} \leq \sup_{s,t \in [0,1]} \left|\frac{\partial^i \kappa}{\partial u^i}(s,t,x(t))\right|, \quad i = 1, 2, 3, 4.
$$

\noindent
We use the following notations throughout the article:\\
Define $S_1 = \{(s, t) : 0 \le t \le s \le 1\}$, $S_2 = \{(s, t) : 0 \le s \le t \le 1\}$ and
\begin{align*}
	\ell_*(s, t) := \ell(s, t, \varphi(t)) =
	\begin{cases}
		\ell_{1,*}(s, t) = \ell_1(s, t, \varphi(t)), ~~~ & (s, t) \in S_1, \\
		\ell_{2,*}(s, t) = \ell_2(s, t, \varphi(t)), ~~~ & (s, t) \in S_2.
	\end{cases}
\end{align*}
Assume that $\ell_* \in C([0, 1] \times [0, 1])$, $\ell_{1,*}\in C^{2r +1}(S_1)$ and $\ell_{2,*} \in C^{2r+1}(S_2)$. 

For $j, k$ non-negative integers, we set
$$
D^{(j,k)}\ell_{i, *}(s,t)=\frac{\partial^{j+k} \ell_{i, *}}{\partial s^j \partial t ^k} (s,t), \quad \text{for} \; i=1,2.
$$
Define 
\begin{equation*}
	C_1 = \max_{0 \leq i \leq 4}\left(\sup_{\substack{s,t \in [0,1] \\ |u| \le \norm{\varphi}_\infty +\delta_0}} \left| \dfrac{\partial^{i} \kappa}{\partial u^{i}}(s,t,u)\right|\right), \quad	C_2 =  \max_{\substack{1 \le k \le n \\ j = 1, 2}}\bigg \{ \sup_{t_{k-1} \le t \le s \le t_k} \left| D^{(j,0)} \ell_{1, *}(s,t)\right|, \sup_{t_{k-1} \le  s \le t \le t_k} \left| D^{(j,0)} \ell_{2,*}(s,t)\right| \bigg \}.
\end{equation*}	

From the Fr\'echet derivatives of $K$, it is easy to see that
\begin{equation} \label{Eq: 04}
	\norm{(K'(\varphi)y)^{(i)}}_\infty \le C_1 \norm{y}_\infty, \; i=0,1,2,
\end{equation}
\begin{equation} \label{Eq: 05}
	\norm{(K''(\varphi)(y_1,y_2))^{(i)}}_\infty \le C_1  \norm{y_1}_\infty \norm{y_2}_\infty, \; i=0,1,2.
\end{equation}

We have
\begin{equation*}
	(K'(\varphi) y)(s) = \int_0^1 \frac{\partial\kappa}{\partial u}(s, t, \varphi(t)) y(t) ~dt = \int_0^1 \ell(s, t, \varphi(t)) y(t) ~dt, \quad s \in [0, 1].
\end{equation*}
For $\delta_0 > 0$, let $\mathcal{B}(\varphi,\delta_0)= \{x \in \mathcal{X} : \norm{x - \varphi}_\infty < \delta_0\}$ be a closed ball with centre $\varphi$ and radius $\delta_0$.
If $x, y \in \mathcal{B}(\varphi,\delta_0)$, then it can be easily shown that
\begin{equation} \label{Eq: 03}
	\norm{K'(x) - K'(y)} \leq \gamma \norm{x - y }_\infty,
\end{equation}
for some constant $\gamma$.

\subsection{Interpolatory projection}

Let the uniform partition of $[0,1]$ be 
\begin{equation*}
	\left\{ 0 =t_0 < t_1 < \cdots <t_n=1 \right\},
\end{equation*} 
where $t_j = \frac{j}{n} = j h, \quad j=1,2, \ldots, n.$ Denote $\Delta_{j} = [t_{j-1}, t_j]$. For $r \ge 0$, define
$
\mathcal{X}_n = \{x \in L^\infty[0, 1] : x|_{\Delta_{j}} \text{ is a polynomial of even degree } \leq 2r\}.
$
In each sub-interval $[t_{j-1}, t_j]$, consider $2r+1$ interpolation points as
\begin{equation*}
	\tau_j^i = t_{j-1} + \frac{ih}{2r}, \quad i = 0, 1, \ldots, 2r  \; \text{ for } \; r \geq 1, \quad \text{and} \quad \tau_j = \frac{t_{j-1} + t_j}{2} \; \text{ for }  \; r = 0.
\end{equation*}
Define $Q_n : \mathcal{X} \mapsto \mathcal{X}_n$ by
\begin{equation} \label{Eq: 06}
	Q_nx(\tau_j^i) = x(\tau_j^i), \quad i = 0, 1, 2, \ldots, 2r; ~ j = 1, 2, \ldots, n.
\end{equation}
Let $x \in C[0, 1]$, then the $(2r+1)^{\text{th}}$ divided difference of $x$ at $\tau_j^0, \tau_j^1, \dots, \tau_j^{2r}$ is defined by
\begin{equation*}
	\left[\tau_j^0, \tau_j^1, \dots, \tau_j^{2r}\right] x= \frac{\left[\tau_j^1, \tau_j^2, \dots, \tau_j^{2r}\right] x - \left[\tau_j^0, \tau_j^1, \dots, \tau_j^{2r-1}\right] x}{\tau_j^{2r}- \tau_j^0}.
\end{equation*}
Let 
\begin{equation*}
	\Psi_j(t) = \left(t - \tau_j^0 \right)\left(t - \tau_j^1 \right) \cdots \left(t - \tau_j^{2r} \right), \quad j =1,2, \ldots, n.
\end{equation*}
As $Q_{n,j}x$ (where $Q_{n, j}$ is $Q_n$ restricted on ${\Delta_j}$)  interpolates $x$ at the points $\tau_j^0, \tau_j^1, \ldots, \tau_j^{2r},$ therefore
\begin{equation*}
	x(t) - Q_{n,j}x(t) = \Psi_j(t) \left[\tau_j^0, \tau_j^1, \dots, \tau_j^{2r}, t  \right] x, \quad t \in \Delta_j.
\end{equation*} 
If $x \in C^{2r+1}(\Delta_j)$, then we have
\begin{equation} \label{Eq: 07}
	\norm{(I - Q_{n,j})x}_\infty \le C_3 \norm{x^{(2r+1)}}_\infty h^{2r+1},
\end{equation}
where $x^{(2r+1)}$ denotes the $(2r+1)^{\text{th}}$ derivative of $x$ and $C_3$ is a constant independent of $h$. See \cite{KEA0}.

\subsection{Even degree polynomial interpolation}
The scenario of interpolatory projection at equidistant collocation points (not necessarily the Gauss points) for a Green's type kernel of the Urysohn integral operator onto a space of piecewise polynomials of degree  $\le 2r$ over a uniform partition of $[0, 1]$ has not been explored in the research literature. This article addresses that gap by studying this case. We first prove two important lemmas based on the divided difference of $(K'(\varphi)x)$ at $\tau_j^0, \tau_j^1, \dots, \tau_j^{2r}$ and $s$ denoted by $\left[\tau_j^0, \tau_j^1, \dots, \tau_j^{2r}, s  \right](K'(\varphi)x)$.
\begin{lemma} \label{l01}
	Let $r \ge 0$ and $\left\{\tau_j^0, \tau_j^1, \dots, \tau_j^{2r} \right\}$ be the set of collocation points in $[t_{j-1}, t_j]$ for $j = 1, 2, \dots, n$. Then
	\begin{eqnarray*}
		\sup_{s \in [t_{j-1}, t_j]}	\left|\left[\tau_j^0, \tau_j^1, \dots, \tau_j^{2r}, s  \right](K'(\varphi)x)\right| \leq C_6 \norm{x}_\infty h^{-2r},
	\end{eqnarray*}
	for some constant $C_6$.
\end{lemma}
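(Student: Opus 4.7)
The plan is to reduce the divided difference to a sum in which each apparent singularity cancels, using only the available regularity $G := K'(\varphi)x \in C^1[0,1]$ with $\|G'\|_\infty \leq C_1\|x\|_\infty$ provided by \eqref{Eq: 04}. One cannot invoke the standard identity $[\tau_j^0,\ldots,\tau_j^{2r},s]G = G^{(2r+1)}(\xi)/(2r+1)!$, since the Green-type kernel $\ell$ produces jumps in the $s$-derivatives of $\ell_*(s,t)$ across $s=t$ and hence $G$ need not be $C^{2r+1}$; this loss of smoothness is exactly why the bound degrades from $O(1)$ to $O(h^{-2r})$.

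I would start from the Newton remainder
\[
G(s) - P(s) = \Psi_j(s)\,[\tau_j^0,\ldots,\tau_j^{2r},s]\,G,
\]
where $P$ is the unique polynomial of degree $\leq 2r$ interpolating $G$ at $\tau_j^0,\ldots,\tau_j^{2r}$, written through the Lagrange basis $L_i(s) = \Psi_j(s)/[(s-\tau_j^i)\,\Psi_j'(\tau_j^i)]$. Exploiting the partition-of-unity identity $\sum_{i=0}^{2r} L_i(s) = 1$ together with the fundamental theorem of calculus,
\[
G(s) - P(s) = \sum_{i=0}^{2r} L_i(s)\bigl(G(s) - G(\tau_j^i)\bigr) = \sum_{i=0}^{2r} L_i(s)\int_{\tau_j^i}^{s} G'(u)\,du.
\]
Dividing by $\Psi_j(s)$ and substituting the closed form of $L_i(s)$ yields
\[
[\tau_j^0,\ldots,\tau_j^{2r},s]\,G = \sum_{i=0}^{2r} \frac{1}{(s-\tau_j^i)\,\Psi_j'(\tau_j^i)} \int_{\tau_j^i}^{s} G'(u)\,du,
\]
in which the potentially singular factor $1/(s-\tau_j^i)$ is absorbed by the length of the interval of integration.

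The remaining bookkeeping is routine: $\bigl|\int_{\tau_j^i}^{s} G'\,du\bigr| \leq \|G'\|_\infty\,|s-\tau_j^i|$, so
\[
\bigl|[\tau_j^0,\ldots,\tau_j^{2r},s]\,G\bigr| \leq \|G'\|_\infty \sum_{i=0}^{2r} \frac{1}{|\Psi_j'(\tau_j^i)|}.
\]
The equispaced structure of the nodes gives $|\Psi_j'(\tau_j^i)| = (h/(2r))^{2r}\,i!\,(2r-i)!$ for $r \geq 1$, whence the sum is of order $h^{-2r}$, and \eqref{Eq: 04} contributes $\|G'\|_\infty \leq C_1\|x\|_\infty$, producing the claim with a suitable constant $C_6$. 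The edge case $r=0$ reduces to $[\tau_j,s]\,G = (G(s)-G(\tau_j))/(s-\tau_j)$, bounded directly by $\|G'\|_\infty \leq C_1\|x\|_\infty$, which matches $h^{-2r}=1$.

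The main obstacle, and the whole point of the lemma, is that a naive sup-norm bound on the divided difference using $\sum_k \prod_{l\neq k}|z_k-z_l|^{-1}$ blows up as $s$ approaches any node $\tau_j^k$, while the Cauchy derivative bound is unavailable due to the irregularity of $\ell$. The partition-of-unity rearrangement above is precisely what defeats both difficulties and delivers an estimate uniform in $s$.
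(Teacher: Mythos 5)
Your proof is correct, but it takes a genuinely different route from the paper. The paper works at the level of the kernel: it writes $[\tau_j^0,\dots,\tau_j^{2r},s](K'(\varphi)x)=\int_0^1[\tau_j^0,\dots,\tau_j^{2r},s]\ell_*(\cdot,t)\,x(t)\,dt$, bounds the \emph{first-order} divided difference $[\tau_j^0,s]\ell_*(\cdot,t)$ uniformly by a three-case analysis around the diagonal $t=s$ (using continuity of $\ell_*$ across the diagonal and the mean value theorem on each triangle), and then climbs to order $2r+1$ by the recursive definition of divided differences, paying a factor $h^{-1}$ per additional node via induction. You instead work directly with $G=K'(\varphi)x$ and use the Lagrange/partition-of-unity representation $[\tau_j^0,\dots,\tau_j^{2r},s]G=\sum_i\bigl(G(s)-G(\tau_j^i)\bigr)/\bigl((s-\tau_j^i)\Psi_j'(\tau_j^i)\bigr)$, bounding each difference quotient by $\norm{G'}_\infty\le C_1\norm{x}_\infty$ (the paper's estimate \eqref{Eq: 04}, which holds precisely because $\ell_*$ is continuous across the diagonal so the boundary terms in the Leibniz differentiation cancel) and reading off $\sum_i|\Psi_j'(\tau_j^i)|^{-1}=(2r/h)^{2r}2^{2r}/(2r)!=O(h^{-2r})$ from the equispacing. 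Your argument is shorter, avoids both the induction and the diagonal case analysis, gives an explicit constant, and cleanly isolates the only regularity actually used ($G\in C^1[0,1]$ with $\norm{G'}_\infty\lesssim\norm{x}_\infty$); it even yields the bound uniformly for all $s\in[0,1]$, not just $s\in[t_{j-1},t_j]$. What the paper's kernel-level estimates buy in exchange is direct reusability in Lemma~\ref{l02} for the confluent difference $[\dots,s,s]$, where your method would additionally need a Lipschitz bound on $G'$ (which does hold here, but requires a separate argument). One cosmetic remark: the constant in \eqref{Eq: 04} should really be controlled by the $s$-derivatives of $\ell_*$ (i.e.\ by $C_2$) rather than by $C_1$ as the paper writes it, but that is the paper's imprecision, not yours, and does not affect the validity of your bound.
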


\begin{proof}    Recall that
	\begin{equation*}
		\tau_j^i = t_{j-1} + \frac{ih}{2r} = t_{j-1}+h\zeta_i, ~ \text{ for } i = 0, 1, 2, \dots, 2r; ~ j = 1, 2, \dots, n,
	\end{equation*}
	where $\displaystyle{\zeta_{i} = \frac{i}{2r} \in [0, 1]}$.
	Clearly $\tau_j^i \in [t_{j-1},t_j]$, we have
	$$
	(K'(\varphi)x)(s) = \int_0^1 \frac{\partial\kappa}{\partial u}(s, t, \varphi(t)) x(t) ~ dt = \int_0^1 \ell_*(s,t) x(t) ~ dt,  \quad s \in [0, 1],
	$$
	where
$\ell_*(s,t) ~= ~
\begin{cases}
	\ell_{1,*}(s,t),    ~~~     & 0\leq t\leq s\leq 1,\\
	\ell_{2,*}(s,t),    ~~~     & 0\leq s\leq t\leq 1.\\
\end{cases}$	Let $s \in [t_{j-1},t_j]$, then	 
	\begin{align} \label{Eq: 08}
		[\tau_j^0,\tau_j^1,s](K'(\varphi)x)&=\int_0^1 [\tau_j^0,\tau_j^1,s] \ell_*(.,t) x(t) ~dt \nonumber \\
		&= \underset{k \ne j}{\sum_{k=1}^{n}}\int_{t_{k-1}}^{t_k} [\tau_j^0,\tau_j^1,s] \ell_*(.,t) x(t) ~dt + \int_{t_{j-1}}^{t_j} [\tau_j^0,\tau_j^1,s] \ell_*(.,t) x(t) ~dt. 
	\end{align}
	Since $\kappa$ is sufficiently differentiable in the interval $[t_{k-1},t_k]$ for $k \ne j$,
	\begin{equation}\label{Eq: 09}
		\left | \int_{t_{k-1}}^{t_k} [\tau_j^0,\tau_j^1,s] \ell_*(.,t) x(t) dt \right | \le C_2 \norm{x}_\infty h.
	\end{equation}
	For $\tau_j^0, \tau_j^1, s \in [t_{j-1},t_j]$, we need to find bound for $[\tau_j^0,\tau_j^1,s] \ell_*(.,t)$.	Fix $s \in [t_{j-1},{t_j}].$
	
	\noindent
	\textbf{Case (I)}
	When $t_{j-1} \le s < \tau_j^0$. Note that 
	\begin{equation*}
		[\tau_j^0,s]\ell_*(.,t) =\dfrac{\ell_*(s,t)-\ell_*(\tau_j^0,t)}{s-\tau_j^0} ~=~
		\begin{cases}
			\dfrac{\ell_{1, *}(s,t)-\ell_{1,*}(\tau_j^0,t)}{s-\tau_j^0}  ~ &\text{if}\; t_{j-1} \le t \le s,\\
			\dfrac{\ell_{2,*}(s,t)-\ell_{1,*}(\tau_j^0,t)}{s-\tau_j^0}  ~ &\text{if}\; s < t < \tau_j^0,\\
			\dfrac{\ell_{2,*}(s,t)-\ell_{2,*}(\tau_j^0,t)}{s-\tau_j^0}  ~ &\text{if}\; \tau_j^0 \le t \le t_j.
		\end{cases}
	\end{equation*}	
	Thus, for a fixed $s \in [t_{j-1},\tau_j^0]$, the function $[\tau_j^0,s]\ell_*(.,t)$ is continuous on $[t_{j-1},t_j].$ Since $\ell_{1,*}(.,t)$ and $\ell_{2,*}(.,t)$ are continuous on $[s, \tau_j^0]$ and differentiable on $(s, \tau_j^0)$, by mean value theorem, we have
	\begin{equation*}
		\frac{\ell_{1, *}(s,t)-\ell_{1,*}(\tau_j^0,t)}{s-\tau_j^0} = D^{(1,0)}\ell_{1,*}(\eta_1,t),  ~~\text{and}~~ \frac{\ell_{2,*}(s,t)-\ell_{2,*}(\tau_j^0,t)}{s-\tau_j^0} = D^{(1,0)}\ell_{2,*}(\eta_2,t);
	\end{equation*}
	for some $\eta_1, \eta_2 \in (s, \tau_j^0).$
	Now, for $t_{j-1} \le t \le s$,
	\begin{equation*}
		\bigg | \frac{\ell_{1, *}(s,t)-\ell_{1,*}(\tau_j^0,t)}{s-\tau_j^0} \bigg|\le \bigg (\sup_{t_{j-1} \le t \le s \le t_j} \left| D^{(1,0)} \ell_{1, *}(s,t)\right| \bigg ) = C_2. 
	\end{equation*}
	Similarly, for $\tau_j^0 \le t \le t_j$,
	\begin{equation*}
		\bigg | \frac{\ell_{2,*}(s,t)-\ell_{2,*}(\tau_j^0,t)}{s-\tau_j^0} \bigg|\le \bigg (\sup_{t_{j-1} \le s \le t \le t_j} \left| D^{(1,0)} \ell_{2,*}(s,t)\right| \bigg ) = C_2. 
	\end{equation*}
	Hence, 
	\begin{equation}\label{Eq: 010}
		\left|[\tau_j^0,s]\ell_*(.,t)\right| \le C_2 , \quad \text{if} \; t_{j-1} \le t \le s  ~\text{and} ~ \tau_j^0 \le t \le t_j.
	\end{equation}
	Now, for $s < t < \tau_j^0$, first we find
	\begin{align*}
		\frac{\ell_{2,*}(s,t)-\ell_{1,*}(\tau_j^0,t)}{s-\tau_j^0} &= \frac{\ell_{2,*}(s,t)-\ell_{2,*}(t,t)+\ell_{1,*}(t,t)-\ell_{1,*}(\tau_j^0,t)}{s-\tau_j^0} \\
		&= \frac{D^{(1,0)}\ell_{2,*}(\eta_3,t)(s-t)}{s-\tau_j^0}+\frac{D^{(1,0)}\ell_{1,*}(\eta_4,t)(t-\tau_j^0)}{s-\tau_j^0},
	\end{align*}
	for some $\eta_3 \in (s,t)$ and $\eta_4 \in (t, \tau_j^0).$ Hence, 
	\begin{equation*}
		\bigg | \frac{\ell_{2,*}(s,t)-\ell_{1,*}(\tau_j^0,t)}{s-\tau_j^0} \bigg| \le  2 C_2.
	\end{equation*}
	Therefore,
	\begin{equation*}
		\left|[\tau_j^0,s]\ell_*(.,t)\right| \le 2C_2 , \quad \text{if} \; s < t < \tau_j^0.
	\end{equation*}
	Thus, by \eqref{Eq: 010} and above inequality, we obtain
	\begin{equation}\label{Eq: 011}
		\sup_{t \in [t_{j-1},t_j]}\left|[\tau_j^0,s]\ell_*(.,t)\right| \le 2C_2.
	\end{equation}
	
	\noindent
	\textbf{Case (II)}
	When $s = \tau_j^0$. In this case
	\begin{align*}
		[\tau_j^0,\tau_j^0] \ell_*(.,t) = \begin{cases}
			D^{(1,0)}\ell_{1,*}(\tau_j^0,t)  \quad &\text{if}\; t_{j-1} \le t < \tau_j^0 < 1,\\
			D^{(1,0)}\ell_{2,*}(\tau_j^0,t)  \quad &\text{if}\; t_{j-1} \le \tau_j^0 < t  < 1.\\	
		\end{cases} 
	\end{align*}
	Hence, 
	\begin{equation}\label{Eq: 012}
		\sup_{t \in [t_{j-1},t_j]}\left|[\tau_j^0,\tau_j^0]\ell_*(.,t)\right| \le C_2.
	\end{equation}
	
	\noindent
	\textbf{Case (III)}
	When $\tau_j^0<s \le t_j$. Proceeding the same way as in Case (I), we have
	\begin{equation*}
		\sup_{t \in [t_{j-1},t_j]}\left|[\tau_j^0,s]\ell_*(.,t)\right| \le 2C_2.
	\end{equation*}
	Hence, by \eqref{Eq: 011}, \eqref{Eq: 012} and above inequality, we obtain
	\begin{equation}\label{Eq: 013}
		\sup_{s,t \in [t_{j-1},t_j]}\left|[\tau_j^0,s]\ell_*(.,t)\right| \le 2C_2.
	\end{equation}
	Note that
	\begin{equation*}
		[\tau_j^0,\tau_j^1,s]\ell_*(.,t) =\dfrac{[\tau_j^0,s]\ell_*(.,t)-[\tau_j^1,s]\ell_*(.,t)}{\tau_j^0-\tau_j^1} = \dfrac{1}{h(\zeta_0-\zeta_1)} \bigg [ [\tau_j^0,s]\ell_*(.,t)-[\tau_j^1,s]\ell_*(.,t) \bigg ],
	\end{equation*}
	which gives
	\begin{align*}
		\bigg |[\tau_j^0,\tau_j^1,s]\ell_*(.,t) \bigg| &\le \dfrac{1}{h |\zeta_0-\zeta_1|} \bigg [ \left |[\tau_j^0,s]\ell_*(.,t)\right |+\left |[\tau_j^1,s]\ell_*(.,t) \right | \bigg ].
	\end{align*}
	It follows from \eqref{Eq: 013} that
	\begin{equation*}
		\sup_{s,t \in [t_{j-1},t_j]}\left|[\tau_j^0,\tau_j^1,s]\ell_*(.,t)\right| \le \dfrac{4C_2}{h |\zeta_0-\zeta_1|}.
	\end{equation*}
	Therefore, 
	\begin{equation*}
		\bigg |\int_{t_{j-1}}^{t_j} [\tau_j^0,\tau_j^1,s] \ell_*(.,t) x(t) ~dt \bigg | 
		\le \dfrac{4C_2}{h |\zeta_0-\zeta_1|}\norm{x}_\infty =  \dfrac{C_4}{h}\norm{x}_\infty,
	\end{equation*} 
	where $C_4=\dfrac{4C_2}{|\zeta_0-\zeta_1|}$ is some constant. Hence, using \eqref{Eq: 09} and above inequality in \eqref{Eq: 08}, we obtain
	\begin{equation*}
		\left|[\tau_j^0,\tau_j^1,s](K'(\varphi)x)\right| \le \bigg ( C_2 h + \dfrac{C_4}{h}\bigg)\norm{x}_\infty \le \dfrac{C_5}{h}\norm{x}_\infty,
	\end{equation*}
	where $C_5$ is some constant. Using mathematical induction, this gives the required estimate
	\begin{equation*}
		\left|[\tau_j^0,\tau_j^1,\dots, \tau_j^{2r}, s](K'(\varphi)x) \right| \le C_6 \norm{x}_\infty \dfrac{1}{h^{2r}},
	\end{equation*}
	for some constant $C_6.$ Since $s \in [t_{j-1},t_j]$ is arbitrary, the proof is complete.
\end{proof}
\begin{lemma} \label{l02}
	Let $r \ge 0$ and $\left\{\tau_j^0, \tau_j^1, \dots, \tau_j^{2r} \right\}$ be the set of collocation points in $[t_{j-1}, t_j]$ for $j = 1, 2, \dots, n$. Then
	\begin{eqnarray*}
		\sup_{s \in [t_{j-1}, t_j]}	\left|\left[\tau_j^0, \tau_j^1, \dots, \tau_j^{2r}, s, s \right] \left( K'(\varphi)x \right) \right| \leq C_8 \norm{x}_\infty h^{-2r},
	\end{eqnarray*}
	for some constant $C_8.$
\end{lemma}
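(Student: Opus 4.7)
The plan is to mirror the argument of Lemma \ref{l01} while accommodating the extra repeated node. First I would write
\begin{equation*}
\bigl[\tau_j^0,\tau_j^1,\dots,\tau_j^{2r},s,s\bigr](K'(\varphi)x) = \int_{0}^{1}\bigl[\tau_j^0,\tau_j^1,\dots,\tau_j^{2r},s,s\bigr]\ell_{*}(\cdot,t)\,x(t)\,dt,
\end{equation*}
and split the integration range as in Lemma \ref{l01} into $[t_{k-1},t_k]$ with $k \ne j$, where $\ell_{*}(\cdot,t)$ is a single smooth branch $\ell_{1,*}(\cdot,t)$ or $\ell_{2,*}(\cdot,t)$ on $[t_{j-1},t_j]$, and the diagonal subinterval $[t_{j-1},t_j]$, where the first $s$-derivative of $\ell_{*}$ jumps across $s=t$. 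For the $k \ne j$ terms, the divided difference acts on a smooth function of $s$ and is bounded in $h$, so its integral against $x(t)$ over a subinterval of length $h$, summed over $k$, contributes $O(\norm{x}_\infty)$.

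The central task is the diagonal integral. I would first establish the base estimate
\begin{equation*}
\bigl|\bigl[\tau^{(0)},s,s\bigr](K'(\varphi)x)\bigr| \le \tilde{C}\,\norm{x}_\infty
\end{equation*}
for any single collocation point $\tau^{(0)} \in [t_{j-1},t_j]$, by a case analysis on the position of $t$ relative to $s$ and $\tau^{(0)}$ modelled on Cases (I)--(III) of Lemma \ref{l01}. When $s$, $\tau^{(0)}$, and $t$ all lie on the same side of the diagonal, Taylor's theorem applied to the smooth branch gives $|[\tau^{(0)},s,s]\ell_{*}(\cdot,t)| \le C_{2}/2$. In the delicate sub-case where $t$ sits strictly between $s$ and $\tau^{(0)}$, I would expand $\ell_{1,*}(\tau^{(0)},t)$ and $\ell_{2,*}(s,t)$ to second order about $(t,t)$, use the continuity $\ell_{1,*}(t,t)=\ell_{2,*}(t,t)$, and gather the residual into a dominant term of the form
\begin{equation*}
\frac{\bigl|D^{(1,0)}\ell_{1,*}(t,t)-D^{(1,0)}\ell_{2,*}(t,t)\bigr|\,(\tau^{(0)}-t)}{(\tau^{(0)}-s)^{2}}.
\end{equation*}
Pointwise this is of size $O(1/|\tau^{(0)}-s|)$, but its support in $t$ has length $|\tau^{(0)}-s|$, so its integral against $x(t)$ is $O(\norm{x}_\infty)$, which delivers the base estimate.

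To promote this to $2r+1$ collocation points I would iterate the symmetric recursion
\begin{equation*}
\bigl[\tau_j^0,\dots,\tau_j^{p},s,s\bigr]f = \frac{\bigl[\tau_j^1,\dots,\tau_j^{p},s,s\bigr]f-\bigl[\tau_j^0,\dots,\tau_j^{p-1},s,s\bigr]f}{\tau_j^{p}-\tau_j^{0}},
\end{equation*}
exactly as in Lemma \ref{l01}. The denominator is a fixed positive multiple of $h$, so each step multiplies the bound by $O(1/h)$; after $2r$ iterations the base constant $\tilde{C}\,\norm{x}_\infty$ becomes $C_{8}\,\norm{x}_\infty\,h^{-2r}$, with $C_{8}$ depending on $r$ but independent of $h$.

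The main obstacle is the near-diagonal sub-case of the base estimate: in contrast with Lemma \ref{l01}, one must extract two derivatives rather than one from the Taylor expansion of $\ell_{*}$, and the jump in the first $s$-derivative across the diagonal produces a genuine $1/(\tau^{(0)}-s)$ pointwise singularity that is absent in Lemma \ref{l01}. The argument must exploit the $|\tau^{(0)}-s|$-shrinkage of the support of this singular contribution in $t$ to recover an $h$-independent bound at the base case.
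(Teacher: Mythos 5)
Your proposal is correct and follows essentially the same route as the paper: the same off-diagonal/diagonal splitting, the same base estimate $\left|\left[\tau_j^0,s,s\right](K'(\varphi)x)\right| \le \tilde{C}\,\norm{x}_\infty$ obtained by observing that the pointwise $1/|s-\tau_j^0|$ singularity in the near-diagonal sub-case is integrated over a $t$-interval of length $|s-\tau_j^0|$, and the same $1/h$-per-node recursion to reach $h^{-2r}$. The only cosmetic difference is that the paper organizes the delicate sub-case via mean-value-theorem decompositions of $\ell_{2,*}(s,t)-\ell_{1,*}(\tau_j^0,t)$ through the diagonal point $(t,t)$ rather than an explicit second-order Taylor expansion isolating the derivative jump, but the resulting bound and its mechanism are identical.
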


\begin{proof}  
	Let $s \in [t_{j-1},t_j]$, then
	\begin{align}\label{Eq: 014}
		[\tau_j^0,s,s](K'(\varphi)x) &=\int_0^1 [\tau_j^0,s,s] \ell_*(.,t) x(t) ~dt \nonumber \\
		&= \underset{k \ne j}{\sum_{k=1}^{n}}\int_{t_{k-1}}^{t_k} [\tau_j^0,s,s] \ell_*(.,t) x(t) ~dt + \int_{t_{j-1}}^{t_j} [\tau_j^0,s,s] \ell_*(.,t) x(t) ~dt.
	\end{align}
	Since $\kappa$ is sufficiently differentiable in the interval $[t_{k-1},t_k]$ for $k \ne j$,
	\begin{equation}\label{Eq: 015}
		\left | \int_{t_{k-1}}^{t_k} [\tau_j^0,s,s] \ell_*(.,t) x(t) dt \right | \le C_2 \norm{x}_\infty h.
	\end{equation}  
	For $\tau_j^0, s \in [t_{j-1},t_j]$, we need to find bound for $[\tau_j^0,s,s] \ell_*(.,t)$. Fix $s \in [t_{j-1},{t_j}].$\\
	\noindent
	\textbf{Case (I)}
	When $t_{j-1} \le s < \tau_j^0$. Note that
	\begin{equation*}
		[\tau_j^0,s,s]\ell_*(.,t) =\dfrac{[s,s]\ell_*(.,t)-[\tau_j^0,s]\ell_*(.,t)}{s-\tau_j^0}.
	\end{equation*}
	Thus,
	\begin{equation*}
		[\tau_j^0,s,s]\ell_*(.,t) ~=~
		\begin{cases}
			\dfrac{\frac{\partial \ell_{1, *}}{\partial s}(s,t)- \left ( \frac{\ell_{1, *}(s,t)-\ell_{1,*}(\tau_j^0,t)}{s-\tau_j^0}\right) }{s-\tau_j^0}  \quad &\text{if}\; t_{j-1} \le t \le s,\\
			\dfrac{\frac{\partial \ell_{2,*}}{\partial s}(s,t)- \left ( \frac{\ell_{2,*}(s,t)-\ell_{1,*}(\tau_j^0,t)}{s-\tau_j^0}\right) }{s-\tau_j^0}  \quad &\text{if}\; s < t < \tau_j^0, \\
			\dfrac{\frac{\partial \ell_{2,*}}{\partial s}(s,t)- \left ( \frac{\ell_{2,*}(s,t)-\ell_{2,*}(\tau_j^0,t)}{s-\tau_j^0}\right) }{s-\tau_j^0}  \quad &\text{if}\; \tau_j^0 \le t \le t_j.
		\end{cases}
	\end{equation*}
	This function $[\tau_j^0,s,s]\ell_*(.,t)$ is possibly discontinuous at $t= s.$ Since $\ell_{1,*}(.,t)$ and $\ell_{2,*}(.,t)$ are continuous on $[s, \tau_j^0]$ and differentiable on $(s, \tau_j^0)$, by mean value theorem, we get	 
	\begin{equation*}
		\frac{\ell_{1, *}(s,t)-\ell_{1,*}(\tau_j^0,t)}{s-\tau_j^0} = D^{(1,0)}\ell_{1,*}(\nu_1,t); ~~\text{and}~~ \frac{\ell_{2,*}(s,t)-\ell_{2,*}(\tau_j^0,t)}{s-\tau_j^0} = D^{(1,0)}\ell_{2,*}(\nu_2,t),
	\end{equation*}
	for some $\nu_1, \nu_2 \in (s, \tau_j^0)$. Now, for $t_{j-1} \le t \le s,$	 
	\begin{align*}
	   \left|  \dfrac{\frac{\partial \ell_{1, *}}{\partial s}(s,t)- \left ( \frac{\ell_{1, *}(s,t)-\ell_{1,*}(\tau_j^0,t)}{s-\tau_j^0}\right )}{s-\tau_j^0}  \right| &= \Bigg | \dfrac{ D^{(1,0)}\ell_{1, *}(s,t)- D^{(1,0)}\ell_{1,*}(\nu_1,t)}{s-\tau_j^0} \Bigg | \nonumber \\
		&= \Bigg | \dfrac{ D^{(2,0)}\ell_{1,*}(\nu_3,t)(s-\nu_1)}{s-\tau_j^0} \Bigg |, ~\text{for} \; \nu_3 \in (s, \nu_1) \nonumber \\
		&\le \bigg (\sup_{t_{j-1} \le t \le s \le t_j} \left| D^{(2,0)} \ell_{1, *}(s,t)\right| \bigg ) = C_2. 
	\end{align*}
	Similarly, it can be shown that for $\tau_j^0 \le t \le t_j$,
	\begin{align*}
		\left|  \dfrac{\frac{\partial \ell_{2,*}}{\partial s}(s,t)- \left ( \frac{\ell_{2,*}(s,t)-\ell_{2,*}(\tau_j^0,t)}{s-\tau_j^0}\right )}{s-\tau_j^0}  \right| \le \bigg (\sup_{t_{j-1} \le s \le t \le t_j} \left| D^{(2,0)} \ell_{2,*}(s,t)\right| \bigg )= C_2.
	\end{align*}
	Hence, \begin{equation}\label{Eq: 016}
		\left|[\tau_j^0,s,s]\ell_*(.,t)\right| \le C_2, \quad \text{if} \; t_{j-1} \le t \le s ~ \text{and} ~ \tau_j^0 \le t \le t_j.
	\end{equation}
	Now, for $s < t < \tau_j^0$, we find
	\begin{align*}
		\frac{\ell_{2,*}(s,t)-\ell_{1,*}(\tau_j^0,t)}{s-\tau_j^0} &= \frac{\ell_{2,*}(s,t)-\ell_{2,*}(t,t)+\ell_{1,*}(t,t)-\ell_{1,*}(\tau_j^0,t)}{s-\tau_j^0} \\
		&= \frac{D^{(1,0)}\ell_{2,*}(\nu_5,t)(s-t)}{s-\tau_j^0}+\frac{D^{(1,0)}\ell_{1,*}(\nu_6,t)(t-\tau_j^0)}{s-\tau_j^0},
	\end{align*}
	for some $\nu_5 \in (s,t)$ and $\nu_6 \in (t, \tau_j^0)$. 	Then
	\begin{multline*}
		\Bigg | \frac{\partial \ell_{2,*}}{\partial s}(s,t)- \left ( \frac{\ell_{2,*}(s,t)-\ell_{1,*}(\tau_j^0,t)}{s-\tau_j^0}\right ) \Bigg |\\
		= \Bigg | D^{(1,0)}\ell_{2,*}(s,t)- \frac{D^{(1,0)}\ell_{2,*}(\nu_5,t)(s-t)}{s-\tau_j^0}-\frac{D^{(1,0)}\ell_{1,*}(\nu_6,t)(t-\tau_j^0)}{s-\tau_j^0}\Bigg |,
	\end{multline*}
	which gives
	\begin{equation*}
		\Bigg | \frac{\partial \ell_{2,*}}{\partial s}(s,t)- \left ( \frac{\ell_{2,*}(s,t)-\ell_{1,*}(\tau_j^0,t)}{s-\tau_j^0}\right ) \Bigg | \le 3C_2.
	\end{equation*}
	Hence,
	\begin{equation*}
		\bigintsss_{s}^{\tau_j^0}\Bigg | \dfrac{\frac{\partial \ell_{2,*}}{\partial s}(s,t)- \left ( \frac{\ell_{2,*}(s,t)-\ell_{1,*}(\tau_j^0,t)}{s-\tau_j^0}\right) }{s-\tau_j^0} \Bigg |~dt \le 3C_2.
	\end{equation*}
	Using \eqref{Eq: 016} and above inequality, we obtain
	\begin{equation*}
		\bigg |\int_{t_{j-1}}^{t_j} [\tau_j^0,s,s] \ell_*(.,t) x(t) ~dt \bigg | 
		\le \left ( C_2 +3 C_2 + C_2 \right )\norm{x}_\infty = 5 C_2 \norm{x}_\infty.
	\end{equation*} 
	Therefore, using \eqref{Eq: 015} and above estimate in \eqref{Eq: 014} we have
	\begin{equation}\label{Eq: 017}
		\left |[\tau_j^0,s,s](K'(\varphi)x) \right| \le \left ( C_2 h + 5 C_2 \right )\norm{x}_\infty \le 6 C_2 \norm{x}_\infty.
	\end{equation}
\\
	\noindent
	\textbf{Case (II)}
	Whenever $s = \tau_j^0$.
	\begin{align*}
		[\tau_j^0,\tau_j^0,\tau_j^0](K'(\varphi)x) &= \frac{1}{2} (K'(\varphi)x)^{''}(\tau_j^0)\\
		&= \frac{1}{2}\left ( \frac{\partial \ell_{1,*}}{\partial s}(\tau_j^0,\tau_j^0)-\frac{\partial \ell_{2,*}}{\partial s}(\tau_j^0,\tau_j^0)\right )x(\tau_j^0)\\&+\frac{1}{2}\left [ \int_{t_j-1}^{\tau_j^0}\frac{\partial^2 \ell_{1,*}}{\partial s^2}(\tau_j^0,t)x(t)~dt+\int_{\tau_j^0}^{t_j}\frac{\partial^2 \ell_{2,*}}{\partial s^2}(\tau_j^0,t)x(t)~dt\right ]. 
	\end{align*}
	Hence, 
	\begin{equation}\label{Eq: 018}
		\left |[\tau_j^0,\tau_j^0,\tau_j^0]K'(\varphi)x\right| \le 2C_2 \norm{x}_\infty. 
	\end{equation}
	\noindent
	\textbf{Case (III)}
	When $\tau_j^0<s \le t_j$. Similarly as Case (I), we have
	\begin{equation*}
		\left |[\tau_j^0,s,s](K'(\varphi)x) \right| \le \left ( C_2 h + 5 C_2 \right )\norm{x}_\infty \le 6 C_2 \norm{x}_\infty. 
	\end{equation*}
	From \eqref{Eq: 017}, \eqref{Eq: 018} and above inequality, it follows that
	\begin{equation}\label{Eq: 019}
		\sup_{s \in [t_{j-1},t_j]} \left|[\tau_j^0,s,s](K'(\varphi)x) \right| \le 6 C_2 \norm{x}_\infty,
	\end{equation}
	which proves the required estimate for the case $r = 0$. Now for $r=1$,
	\begin{align*}
		[\tau_j^0,\tau_j^1,s,s](K'(\varphi)x) 
		&= \dfrac{[\tau_j^0,s,s](K'(\varphi)x)-[\tau_j^1,s,s](K'(\varphi)x)}{\tau_j^0-\tau_j^1} \\
		&= \frac{1}{h (\zeta_0-\zeta_1) } \bigg [[\tau_j^0,s,s](K'(\varphi)x)-[\tau_j^1,s,s](K'(\varphi)x) \bigg],
	\end{align*}
	which gives
	\begin{equation*}
		\left |[\tau_j^0,\tau_j^1,s,s](K'(\varphi)x) \right | 	\le \frac{1}{h | \zeta_0-\zeta_1 |} \bigg [\left|[\tau_j^0,s,s](K'(\varphi)x) \right|+\left|[\tau_j^1,s,s](K'(\varphi)x) \right| \bigg].
	\end{equation*}
	Hence, it follows from \eqref{Eq: 019} that
	\begin{equation*}
		\sup_{s \in [t_{j-1},t_j]} \left|[\tau_j^0,\tau_j^1,s,s](K'(\varphi)x) \right| \le \frac{12 C_2}{h | \zeta_0-\zeta_1 |} \norm{x}_\infty = C_7 \norm{x}_\infty \frac{1}{h},
	\end{equation*}
	where $C_7 = \frac{12 C_2}{| \zeta_0-\zeta_1 |}$ is some constant. By mathematical induction, it follows that
	\begin{equation*}
		\sup_{s \in [t_{j-1},t_j]} \left|[\tau_j^0,\tau_j^1,\dots, \tau_j^{2r},s,s](K'(\varphi)x) \right| \le C_8 \norm{x}_\infty \frac{1}{h^{2r}} ,
	\end{equation*}
	where $C_8$ is some constant.
\end{proof}
\section{Iteration methods of approximation}

We see that $K$ is a Urysohn integral operator with Green's function type kernel and $Q_n$ is the interpolatory operator at $2r+1$ interpolating points defined by \eqref{Eq: 06}. In the collocation method, the main equation \eqref{Eq: 02} is approximated by
\begin{equation} \label{Eq: 020}
	\varphi_{n}^C - Q_nK(\varphi_{n}^C) = Q_nf.
\end{equation}
The iterated collocation solution is defined by
\begin{equation} 	\label{Eq: 021}
	\varphi_n^S = K (\varphi_n^S) + f. 
\end{equation}
In Grammont-Kulkarni \cite{LGRPK1}, the following modified collocation method is proposed:
\begin{equation} \label{Eq: 022}
	\varphi_n^M - K_n^M (\varphi_n^M)= f, 
\end{equation}
where
\begin{equation*}
K_n^M (x) = Q_n  K (x) + K (Q_n x) - Q_n K(Q_n x).
\end{equation*}
The iterated modified collocation solution is defined as 
\begin{equation} \label{Eq: 023}
	\widetilde{\varphi}_n^M = K (\varphi_n^M) + f.
\end{equation}

In this section, we present some preliminary results using conventional methodologies and the previously mentioned lemmas. These results significantly contribute to our main result.
\begin{proposition} \label{p01}
	If $x \in C^{2r+2}[0,1]$, then
	\begin{eqnarray*}
		\norm{K'(\varphi)(I-Q_n)x }_\infty \le 2C_{9} \norm{x}_{2r+2, \infty} h^{2r + 2},
	\end{eqnarray*}
	where $C_{9}$ is a constant independent of $h$.
\end{proposition}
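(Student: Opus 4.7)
The plan is to exploit a superconvergence property of even-degree piecewise polynomial interpolation at symmetric nodes. Since the $2r+1$ equidistant points $\tau_j^0, \ldots, \tau_j^{2r}$ are symmetric about the midpoint $t_j^{*} := (t_{j-1} + t_j)/2$, the identity $\tau_j^i - t_j^{*} = (i - r)h/(2r)$ shows that $\Psi_j(t_j^{*} + u)$ is odd in $u$, so
\[
\int_{t_{j-1}}^{t_j} \Psi_j(t)\, dt = 0.
\]
This single identity will generate the extra power of $h$ beyond the $h^{2r+1}$ bound already recorded in \eqref{Eq: 07}.

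Fix $s \in [0,1]$ and let $k$ be the unique index with $s \in \Delta_k$. First I would split
\[
(K'(\varphi)(I - Q_n)x)(s) = \sum_{j=1}^{n} I_j(s), \qquad I_j(s) = \int_{\Delta_j} \ell_*(s, t)\,(I - Q_{n, j})x(t)\, dt,
\]
and treat $j = k$ separately from $j \neq k$. For the \emph{diagonal} term, the length of $\Delta_k$ is $h$, so combining \eqref{Eq: 07} with $|\ell_*| \le C_1$ gives $|I_k(s)| \le C_1 C_3\, \|x^{(2r+1)}\|_\infty\, h^{2r+2}$ immediately, with no cancellation required. This single step is what absorbs the Green's-function-type discontinuity of $\ell_*$ across $s = t$.

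For each $j \neq k$, the point $s$ lies strictly outside $\Delta_j$, so $\ell_*(s, \cdot)$ agrees on $\Delta_j$ with a single smooth piece ($\ell_{1,*}$ or $\ell_{2,*}$); in particular $\partial_t \ell_*(s, \cdot)$ is bounded uniformly in $s$ and $j$. Writing $(I - Q_{n,j})x(t) = \Psi_j(t)\, g_j(t)$ with $g_j(t) = [\tau_j^0, \ldots, \tau_j^{2r}, t]\, x$, the Hermite--Genocchi formula bounds both $\|g_j\|_\infty$ and $\|g_j'\|_\infty = \sup_t \bigl|[\tau_j^0, \ldots, \tau_j^{2r}, t, t]\, x\bigr|$ in terms of $\|x\|_{2r+2, \infty}$. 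Setting $F_j(t) = \ell_*(s, t)\, g_j(t)$ and invoking the vanishing-mean property of $\Psi_j$,
\[
I_j(s) = \int_{\Delta_j} \Psi_j(t)\,\bigl[F_j(t) - F_j(t_j^{*})\bigr]\, dt,
\]
so that $|I_j(s)| \le \|\Psi_j\|_\infty \cdot \|F_j'\|_\infty \cdot h^{2} \le C\, \|x\|_{2r+2, \infty}\, h^{2r+3}$, using $|\Psi_j(t)| \le h^{2r+1}$ on $\Delta_j$.

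Finally, summing the at most $n - 1 < 1/h$ off-diagonal contributions converts $h^{2r+3}$ into $h^{2r+2}$ and matches the diagonal estimate, yielding the constant $2C_9$ in the stated inequality. The main obstacle throughout is the discontinuity of $\ell_*$ along $s = t$; this is resolved entirely by the diagonal/off-diagonal split, since the symmetry argument that produces the extra power of $h$ only needs smoothness of $\ell_*(s, \cdot)$ within each off-diagonal subinterval, and the diagonal subinterval is already short enough to absorb its $O(h^{2r+1})$ interpolation error into an $O(h^{2r+2})$ contribution by simple integration.
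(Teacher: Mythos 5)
Your proof is correct and follows essentially the same route as the paper: split off the subinterval containing $s$, and on every other subinterval exploit $\int_{\Delta_j}\Psi_j(t)\,dt=0$ together with a first-order Taylor/mean-value bound on $t\mapsto\ell_*(s,t)\,[\tau_j^0,\ldots,\tau_j^{2r},t]x$ to gain one extra power of $h$ before summing the $O(1/h)$ off-diagonal terms. The only (harmless) difference is on the diagonal subinterval, where you use the crude bound $\norm{\ell_*}_\infty\, C_3\norm{x^{(2r+1)}}_\infty h^{2r+1}\cdot h$ instead of repeating the zero-mean argument as the paper does; both yield an $O(h^{2r+2})$ contribution there, and you additionally make explicit the symmetry argument for $\int_{\Delta_j}\Psi_j(t)\,dt=0$ that the paper leaves unstated.
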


\begin{proof} 	   Let $s \in [0,1]$. Then we have
	\begin{align*}
		K'(\varphi)(I-Q_n)x(s) &=\int_0^1 \ell_*(s,t) (I-Q_n) x(t) ~dt \notag \\
		&= \sum_{j=1}^{n} \int_{t_{j-1}}^{t_j} \ell_*(s,t) (I-Q_{n,j})x(t) ~dt  \notag \\
		&= \sum_{j=1}^{n} \int_{t_{j-1}}^{t_j} \ell_*(s,t) \Psi_j(t)  \left[\tau_j^0,\tau_j^1,\dots, \tau_j^{2r},t\right]x  ~dt,
	\end{align*}
	where
	\begin{equation*}
		\Psi_j(t) = \prod_{i=0}^{2r} (t- \tau_j^i), \quad \text{for} ~ j=1,2,\dots,n.
	\end{equation*} 
	\\
	Let $s \in [t_{i-1},t_i] \subset [0,1]$, for some $1 \le i \le n$. Then we write
	\begin{equation} \label{Eq: 024}
		K'(\varphi)(I-Q_n)x(s) = \sum_{\substack{j=1 \\  j\ne i} }^{n} 	\int_{t_{j-1}}^{t_j} \ell_*(s,t) \Psi_j(t) \left[\tau_j^0,\tau_j^1,\dots, \tau_j^{2r},t\right]x ~dt 
		  + \int_{t_{i-1}}^{t_i} \ell_*(s,t) \Psi_i(t) \left[\tau_i^0,\tau_i^1,\dots, \tau_i^{2r},t\right]x ~dt.		
	\end{equation}
	For fix $s$, let
	\begin{align*}
		f_j^s(t) = \ell_*(s,t) \left[\tau_j^0,\tau_j^1,\dots, \tau_j^{2r},t\right]x, \quad t \in [t_{j-1}, t_j], ~ \text{for} ~ j=1,2,\dots,n.
	\end{align*}
	That is
	\begin{equation*}
		f_j^s(t) ~= ~
		\begin{cases}
			\ell_{1,*}(s,t) \left[\tau_j^0,\tau_j^1,\dots, \tau_j^{2r},t\right]x,  ~~~          &  j \le i,\\
			\ell_{2,*}(s,t) \left[\tau_j^0,\tau_j^1,\dots, \tau_j^{2r},t\right]x, ~~~          & j \ge i.\\
		\end{cases} 
	\end{equation*} 
	Note that the function $f_j^s$ is continuous on $[t_{j-1}, t_j]$ and differentiable on $(t_{j-1}, t_j)$ for $j \ne i$. Therefore by Taylor series expansion of $f_j^s$ about the point $s_j = \frac{t_{j-1}+t_j}{2}$, there exist a point $\zeta_{j1} \in (t_{j-1}, t_j)$ such that
	\begin{equation*}
		(f_j^s)(t) = f_j^s \left( s_j \right) + \left(t - s_j \right) (f_j^s)'(\zeta_{j1}),
	\end{equation*} 
	where
	\begin{equation*}
		(f_j^s)'(\zeta_{j1}) ~= ~
		\begin{cases}
			\ell_{1,*}(s,\zeta_{j1}) \left[\tau_j^0,\tau_j^1,\dots, \tau_j^{2r},\zeta_{j1}, \zeta_{j1} \right]x 
			+ D^{(0,1)}\ell_{1,*}(s,\zeta_{j1}) \left[\tau_j^0,\tau_j^1,\dots, \tau_j^{2r},\zeta_{j1} \right]x,~~~          &  j \le i, \vspace*{1.5mm}\\
			\ell_{2,*}(s,\zeta_{j1}) \left[\tau_j^0,\tau_j^1,\dots, \tau_j^{2r},\zeta_{j1}, \zeta_{j1} \right]x 
			+  D^{(0,1)} \ell_{2,*}(s,\zeta_{j1}) \left[\tau_j^0,\tau_j^1,\dots, \tau_j^{2r},\zeta_{j1} \right]x,~~~          & j \ge i.\\
		\end{cases} 
	\end{equation*} 
	Let
	\begin{equation*}
		C_{9} = \max_{k =0,1,2}\bigg \{ \sup_{0 \le t \le s \le 1} \left| D^{(0,k)}\ell_{1,*}(s,t)\right|, \sup_{0 \le  s \le t \le 1} \left| D^{(0,k)} \ell_{2,*}(s,t)\right| \bigg \}.
	\end{equation*}  
	Then 
	\begin{equation*}
		\left|(f_j^s)'(\zeta_{j1}) \right| \le 2 C_{9} \norm{x}_{2r +2, \infty}.
	\end{equation*}
	Now, we write
	\begin{equation*}
		\int_{t_{j-1}}^{t_j} \ell_*(s,t) \Psi_j(t) \left[\tau_j^0,\tau_j^1,\dots, \tau_j^{2r},t\right]x ~dt = \int_{t_{j-1}}^{t_j}  \left(t - s_j \right) (f_j^s)'(\zeta_{j1}) \Psi_j(t) ~dt.
	\end{equation*}
	It follows that 
	\begin{equation*}
		\left|\int_{t_{j-1}}^{t_j} \ell_*(s,t) \Psi_j(t) \left[\tau_j^0,\tau_j^1,\dots, \tau_j^{2r},t\right]x ~dt \right| \le  2 C_{10} \norm{x}_{2r +2, \infty} h^{2r + 3}.
	\end{equation*}
	Since $\int_{t_{i-1}}^{t_i} \Psi_i(t) ~dt =0$,
	\begin{align*}
		\int_{t_{i-1}}^{t_i} \ell_*(s,t) \Psi_i(t) \left[\tau_i^0,\tau_i^1,\dots, \tau_i^{2r},t\right]x ~dt &= \int_{t_{i-1}}^{t_i}   (f_i^s)(t) \Psi_i(t) ~dt, ~ s \in [t_{i-1},t_i] \\
		&= \int_{t_{i-1}}^{t_i}   [(f_i^s)(t)-(f_i^s)(s)] \Psi_i(t) ~dt. 
	\end{align*}
	Since the function $f_i^s$ is continuous on $[t_{i-1}, t_i]$ and differentiable on $(t_{i-1}, t_i)$, by mean value theorem, we have 
	\begin{equation*}
		(f_i^s)(t)-(f_i^s)(s) = (t-s)(f_i^s)'(\zeta_{i2}),
	\end{equation*}
	for some $\zeta_{i2} \in (t_{i-1}, t_i)$. Then
	\begin{equation*}
		\left|\int_{t_{i-1}}^{t_i} \ell_*(s,t) \Psi_i(t) \left[\tau_i^0,\tau_i^1,\dots, \tau_i^{2r},t\right]x ~dt \right| \le  2 C_{10} \norm{x}_{2r +2, \infty} h^{2r + 3}.
	\end{equation*}
	By \eqref{Eq: 024},
	\begin{align*}
		\left|K'(\varphi)(I-Q_n)x(s)\right| &\le \sum_{\substack{j=1 \\  j\ne i} }^{n} 	\left|\int_{t_{j-1}}^{t_j} \ell_*(s,t) \left( [\tau_j^0,\tau_j^1,\dots, \tau_j^{2r},t]x \right) \Psi_j(t) ~dt \right|\\
		& \quad + \left|\int_{t_{i-1}}^{t_i} \ell_*(s,t) \left( [\tau_i^0,\tau_i^1,\dots, \tau_i^{2r},t]x \right) \Psi_i(t) ~dt 	\right| .
	\end{align*}
	Hence
	\begin{equation*}
		\norm{K'(\varphi)(I-Q_n)x(s)}_\infty \le 2 C_{9} \norm{x}_{2r +2, \infty} h^{2r + 2},
	\end{equation*}
	which proves the result.
\end{proof}
\begin{proposition} \label{p02}
	If $x \in C^{2r+1}[0,1]$, then
	\begin{eqnarray*}
		\norm{K'(\varphi)(I-Q_n)K'(\varphi)(I-Q_n)x }_\infty \le C_{11} \norm{x}_{2r+1, \infty} h^{2r + 3},
	\end{eqnarray*}
	where $C_{11}$ is a constant independent of $h$.
\end{proposition}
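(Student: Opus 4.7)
The plan is to follow the structure of the proof of Proposition \ref{p01}, but with divided differences taken of the composite function $g(t)=K'(\varphi)(I-Q_n)x(t)$ rather than of $x$ itself. The smoothing effect of the inner $K'(\varphi)$, together with Lemmas \ref{l01} and \ref{l02}, will compensate for the lost derivative and allow the argument to run under the weaker hypothesis $x\in C^{2r+1}[0,1]$.

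First, setting $g=K'(\varphi)(I-Q_n)x$, I would write
\begin{equation*}
\bigl(K'(\varphi)(I-Q_n)g\bigr)(s)=\sum_{j=1}^{n}\int_{t_{j-1}}^{t_j}\ell_*(s,t)\,\Psi_j(t)\,[\tau_j^0,\dots,\tau_j^{2r},t]g\,dt.
\end{equation*}
Applying Lemmas \ref{l01} and \ref{l02} with $(I-Q_n)x$ in place of $x$, together with the interpolation estimate \eqref{Eq: 07}, yields
\begin{equation*}
\sup_{t\in[t_{j-1},t_j]}\bigl|[\tau_j^0,\dots,\tau_j^{2r},t]g\bigr|\le C_{6}C_{3}\,h\,\|x\|_{2r+1,\infty},\qquad \sup_{t\in[t_{j-1},t_j]}\bigl|[\tau_j^0,\dots,\tau_j^{2r},t,t]g\bigr|\le C_{8}C_{3}\,h\,\|x\|_{2r+1,\infty},
\end{equation*}
which play exactly the role that the bound involving $\|x\|_{2r+2,\infty}$ did in Proposition \ref{p01}.

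Next, I would split the sum at the index $i$ with $s\in[t_{i-1},t_i]$. For $j\ne i$, put $h_j^s(t)=\ell_*(s,t)[\tau_j^0,\dots,\tau_j^{2r},t]g$. Since $s$ lies outside $[t_{j-1},t_j]$, the factor $\ell_*(s,\cdot)$ is smooth there, so $h_j^s\in C^1[t_{j-1},t_j]$. Taylor-expanding about $s_j=(t_{j-1}+t_j)/2$ and using the standard identity $\frac{d}{dt}[\tau_j^0,\dots,\tau_j^{2r},t]g=[\tau_j^0,\dots,\tau_j^{2r},t,t]g$ together with the two displayed bounds gives $|(h_j^s)'(\zeta_{j1})|\le C\,h\,\|x\|_{2r+1,\infty}$ for some interior $\zeta_{j1}$. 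By the symmetry of the equidistant nodes about $s_j$, the polynomial $\Psi_j$ of odd degree $2r+1$ has antisymmetric zeros about $s_j$, so $\int_{t_{j-1}}^{t_j}\Psi_j(t)\,dt=0$. Hence only the linear term of the Taylor expansion contributes, and each off-diagonal integral is bounded by $C\,h^{2r+4}\|x\|_{2r+1,\infty}$; summing over the $O(1/h)$ indices $j\ne i$ gives a total of $O(h^{2r+3})\|x\|_{2r+1,\infty}$.

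For the diagonal term $j=i$, I would use $\int\Psi_i\,dt=0$ again to write $\int h_i^s(t)\Psi_i(t)\,dt=\int[h_i^s(t)-h_i^s(s)]\Psi_i(t)\,dt$. The main obstacle is that $h_i^s$ is in general not differentiable at $t=s$, because the $t$-derivative of $\ell_*(s,\cdot)$ jumps across $t=s$. As in the proofs of Lemmas \ref{l01} and \ref{l02}, I would resolve this by splitting the integral at $t=s$ and applying the mean value theorem separately on $[t_{i-1},s]$ (where $\ell_*(s,\cdot)$ coincides with the smooth function $\ell_{1,*}(s,\cdot)$) and on $[s,t_i]$ (where it coincides with $\ell_{2,*}(s,\cdot)$). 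In each piece, the uniform bounds on the divided differences of $g$ established above give $|h_i^s(t)-h_i^s(s)|\le C\,|t-s|\,h\,\|x\|_{2r+1,\infty}$, so the diagonal integral is $O(h^{2r+4})\|x\|_{2r+1,\infty}$, which is dominated by the off-diagonal contribution. Combining the two estimates proves the claimed $O(h^{2r+3})$ bound.
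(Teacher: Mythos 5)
Your proposal is correct and follows essentially the same route as the paper: both write the quantity as $\sum_j \int \ell_*(s,t)\Psi_j(t)[\tau_j^0,\dots,\tau_j^{2r},t]\bigl(K'(\varphi)(I-Q_n)x\bigr)\,dt$, bound the divided differences of $K'(\varphi)(I-Q_n)x$ by combining Lemmas \ref{l01} and \ref{l02} with the interpolation estimate \eqref{Eq: 07} to gain the extra factor of $h$, and then reuse the Taylor-expansion-about-the-midpoint argument of Proposition \ref{p01} together with $\int_{t_{j-1}}^{t_j}\Psi_j(t)\,dt=0$. Your treatment of the diagonal term is in fact slightly more careful than the paper's (which invokes the mean value theorem on all of $(t_{i-1},t_i)$ despite the kink of $\ell_*(s,\cdot)$ at $t=s$), but the underlying argument is the same.
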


\begin{proof}  
	For any $s \in [0,1]$, 
		\begin{align} \label{Eq: 025}
			K'(\varphi)(I-Q_n)K'(\varphi)(I-Q_n)x(s) &=\int_0^1 \ell_*(s,t) (I-Q_n)K'(\varphi)(I-Q_n)x(t) ~dt \notag \\
			&= \sum_{j=1}^{n} \int_{t_{j-1}}^{t_j}\ell_*(s,t)(I-Q_{n,j})K'(\varphi)(I-Q_n)x(t)~dt\notag \\
			&= \sum_{j=1}^{n} \int_{t_{j-1}}^{t_j} g_j^s(t) \Psi_j(t) ~dt,
		\end{align}
	where
	\begin{align*}
		g_j^s(t) = \ell_*(s,t) \left( \left[\tau_j^0,\tau_j^1,\dots, \tau_j^{2r},t\right]K'(\varphi)(I-Q_n)x \right), ~ t \in [t_{j-1}, t_j], ~ \text{for} ~ j=1,2,\dots,n.
	\end{align*}
	Let $s \in [t_{i-1},t_i] \subset [0,1]$, for some $1 \le i \le n$.
	Note that
	\begin{equation*}
		g_j^s(t) ~= ~
		\begin{cases}
			\ell_{1,*}(s,t) \left( \left[\tau_j^0,\tau_j^1,\dots, \tau_j^{2r},t\right]K'(\varphi)(I-Q_n)x \right), ~~~          &  j \le i,\\
			\ell_{2, *}(s,t) \left( \left[\tau_j^0,\tau_j^1,\dots, \tau_j^{2r},t\right]K'(\varphi)(I-Q_n)x \right),~~~          & j \ge i.\\
		\end{cases} 
	\end{equation*} 
	Denote
	\begin{equation*}
		\left ( \delta_j^{2r} K'(\varphi)(I-Q_n)x \right ) [t] =\left( \left[\tau_j^0,\tau_j^1,\dots, \tau_j^{2r},t\right]K'(\varphi)(I-Q_n)x \right),
	\end{equation*}   
	and
	\begin{equation*}
		\left ( \delta_j^{2r}  K'(\varphi)(I-Q_n)x \right ) [t, t] =\left( \left[\tau_j^0,\tau_j^1,\dots, \tau_j^{2r},t, t \right]K'(\varphi)(I-Q_n)x \right).
	\end{equation*}  
	Note that the function $g_j^s$ is continuous on $[t_{j-1}, t_j]$ and differentiable on $(t_{j-1}, t_j)$ for $j \ne i$. Then it follows that
	\begin{equation*}
		(g_j^s)'(t) ~= ~
		\begin{cases}
			\ell_{1,*}(s,t) \left ( \delta_j^{2r}  K'(\varphi)(I-Q_n)x \right ) [t, t] 
			+ \left ( \delta_j^{2r}  K'(\varphi)(I-Q_n)x \right ) [t]~ D^{(0,1)}\ell_{1,*}(s,t), ~~~          &  j \le i, \vspace*{1.5mm}\\
			\ell_{2, *}(s,t) \left ( \delta_j^{2r}  K'(\varphi)(I-Q_n)x \right ) [t, t] 
			+ \left ( \delta_j^{2r}  K'(\varphi)(I-Q_n)x \right ) [t]~ D^{(0,1)} \ell_{2, *}(s,t),~~~          & j \ge i.\\
		\end{cases} 
	\end{equation*} 
	Therefore, by Lemma \ref{l01}, Lemma \ref{l02} and estimate \eqref{Eq: 07}, we have
	\begin{equation*}
		\left| (g_j^s)'(t) \right| \le C_{9} C_{10} \norm{(I-Q_n)x}_\infty h^{-2r}  \le C_3 C_{9} C_{10} \norm{x^{(2r + 1)}}_\infty h.
	\end{equation*}
	From \eqref{Eq: 025}, we write
	\begin{align*}
		K'(\varphi)(I-Q_n)K'(\varphi)(I-Q_n)x(s) &= \sum_{\substack{j=1 \\  j\ne i} }^{n} 	\int_{t_{j-1}}^{t_j}  g_j^s(t) \Psi_j(t) ~dt + \int_{t_{i-1}}^{t_i}  g_i^s(t) \Psi_i(t) ~dt
	\end{align*}
	By employing similar argument as presented in the proof of Proposition \ref{p01}, we acquire 
	\begin{equation*}
		\left|K'(\varphi)(I-Q_n)K'(\varphi)(I-Q_n)x(s)\right| \le 2 C_3 C_{9}  C_{10} \norm{x^{(2r + 1)}}_\infty h^{2r + 3}.
	\end{equation*}
	This proves the proposition.
\end{proof}
\begin{proposition}\label{p03}
	If $x \in C[0,1]$, then
	\begin{eqnarray*}
		\norm{K'(\varphi)(I-Q_n)K'(\varphi)x }_\infty \le C_{12} \norm{x}_{\infty} h^{ 2},
	\end{eqnarray*}
	where $C_{12}$ is a constant independent of $h$.
	Further, 
	\begin{eqnarray*}
		\norm{K'(\varphi)(I-Q_n)K'(\varphi) } =  O(h^{2}).
	\end{eqnarray*}	
\end{proposition}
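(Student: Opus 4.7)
The plan is to mimic the template of Propositions \ref{p01} and \ref{p02}, but this time with $K'(\varphi)x$ itself (rather than $x$ or $(I-Q_n)x$) playing the role of the input function, and to use Lemmas \ref{l01} and \ref{l02} to control the divided differences. First I would write
\begin{equation*}
K'(\varphi)(I-Q_n)K'(\varphi)x(s) = \sum_{j=1}^{n} \int_{t_{j-1}}^{t_j} \ell_*(s,t)\, \Psi_j(t)\, \left[\tau_j^0, \dots, \tau_j^{2r}, t\right] K'(\varphi)x \, dt,
\end{equation*}
fix $s \in [t_{i-1}, t_i]$ for some $1 \le i \le n$, and set $g_j^s(t) = \ell_*(s,t) \left[\tau_j^0, \dots, \tau_j^{2r}, t\right] K'(\varphi)x$, which on each $[t_{j-1}, t_j]$ splits into two pieces according to whether $j \le i$ or $j \ge i$, mirroring the treatment of $f_j^s$ in the proof of Proposition \ref{p01}.

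Next, using the standard identity $\frac{d}{dt}\left[\tau_j^0, \dots, \tau_j^{2r}, t\right] K'(\varphi)x = \left[\tau_j^0, \dots, \tau_j^{2r}, t, t\right] K'(\varphi)x$, I would apply Lemmas \ref{l01} and \ref{l02} to $K'(\varphi)x$ (with the lemmas' free variable $s$ now taken to be $t$) to obtain
\begin{equation*}
|(g_j^s)'(t)| \le C\, \norm{x}_\infty\, h^{-2r}
\end{equation*}
uniformly in $s$, $t$ and $j$. For the off-diagonal indices $j \ne i$, a first-order Taylor expansion of $g_j^s$ about the midpoint $s_j = (t_{j-1}+t_j)/2$ together with the symmetry identity $\int_{t_{j-1}}^{t_j} \Psi_j(t)\,dt = 0$ (which holds because the equidistant nodes $\tau_j^0, \dots, \tau_j^{2r}$ are symmetric about $s_j$, forcing $\Psi_j$ to be odd about $s_j$) kills the leading term and gives
\begin{equation*}
\left| \int_{t_{j-1}}^{t_j} g_j^s(t)\, \Psi_j(t)\, dt \right| \le C\, \norm{x}_\infty\, h \cdot h^{-2r} \cdot h^{2r+1} \cdot h = C\, \norm{x}_\infty\, h^{3}.
\end{equation*}
For $j=i$, the identity $\int_{t_{i-1}}^{t_i} \Psi_i(t)\,dt = 0$ is used to subtract $g_i^s(s)\Psi_i(t)$, and the mean value theorem yields the same $O(\norm{x}_\infty h^3)$ bound for this single diagonal term. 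Summing the $n = 1/h$ off-diagonal contributions produces the required estimate $\norm{K'(\varphi)(I-Q_n)K'(\varphi)x}_\infty \le C_{12}\norm{x}_\infty h^2$, and the operator-norm statement then follows at once from the definition of the $L^\infty$ operator norm.

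The main obstacle is the bookkeeping on the diagonal subinterval $[t_{i-1}, t_i]$: because $D^{(0,1)}\ell_*$ jumps across the line $s=t$, the derivative $(g_i^s)'(t)$ may be discontinuous at $t=s$, so the mean value theorem has to be invoked separately on $[t_{i-1},s]$ and $[s, t_i]$ (or, equivalently, on each side of $s$ that the variable $t$ lies on). The essential point is that in both regions the uniform bound $|(g_i^s)'(t)| \le C\norm{x}_\infty h^{-2r}$ continues to hold thanks to Lemmas \ref{l01} and \ref{l02}, and $|t-s| \le h$, so the on-diagonal contribution remains $O(\norm{x}_\infty h^3)$ and is dominated by the $O(\norm{x}_\infty h^2)$ accumulated from the off-diagonal terms. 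Once this point is settled, the calculation is a direct adaptation of the argument used for Proposition \ref{p01}, with no new ingredients.
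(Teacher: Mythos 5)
Your argument is correct, but it takes a genuinely different and much heavier route than the paper. The paper's proof of Proposition \ref{p03} does not use the orthogonality $\int_{t_{j-1}}^{t_j}\Psi_j(t)\,dt=0$, nor Lemmas \ref{l01}--\ref{l02}: it simply invokes the smoothing property of the Green's-type kernel, namely that $K'(\varphi)$ maps $C[0,1]$ into $C^2[0,1]$ with $\norm{(K'(\varphi)x)^{(2)}}_\infty \le C_1\norm{x}_\infty$ (estimate \eqref{Eq: 04}), applies the elementary interpolation error bound $\norm{(I-Q_{n,j})g}_\infty \le C_3\norm{g''}_\infty h^2$ valid for $g\in C^2$, and bounds the outer operator crudely by $\norm{\ell_*}_\infty$; summing $\int_{t_{j-1}}^{t_j}dt$ over $j$ then gives $C_{12}=C_1C_3\norm{\ell_*}_\infty$ in three lines. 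Your route --- Taylor expansion of $g_j^s$ about the midpoint, the odd symmetry of $\Psi_j$ about $s_j$, the divided-difference bounds of Lemmas \ref{l01} and \ref{l02} applied to $K'(\varphi)x$, and separate application of the mean value theorem on either side of the derivative jump at $t=s$ in the diagonal cell --- is sound and reproduces the same $O(h^2)$, since the $n$ off-diagonal contributions of size $O(\norm{x}_\infty h^3)$ dominate the single diagonal one. Two remarks on what each approach buys: (i) your identity $\frac{d}{dt}\left[\tau_j^0,\dots,\tau_j^{2r},t\right]K'(\varphi)x=\left[\tau_j^0,\dots,\tau_j^{2r},t,t\right]K'(\varphi)x$ itself requires $K'(\varphi)x$ to be (piecewise) $C^2$, i.e.\ the very smoothing property the paper exploits directly, so no hypothesis is saved; (ii) the extra cancellation you extract from $\int\Psi_j=0$ is not needed to reach the exponent $2$, though an argument of your type is exactly what Propositions \ref{p01} and \ref{p02} require, where the crude bound would lose an order. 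For this particular statement the paper's argument is the economical one, but yours is valid.
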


\begin{proof}   Let $s \in [0,1]$. Then 
	\begin{align*}
		K'(\varphi)(I-Q_n)K'(\varphi)x(s) &=\int_0^1 \ell_*(s,t) (I-Q_n)K'(\varphi)x(t) ~dt  \\
		&= \sum_{j=1}^{n} \int_{t_{j-1}}^{t_j} \ell_*(s,t) (I-Q_{n,j})K'(\varphi)x(t) ~dt.
	\end{align*}
	Since $x \in C[0,1]$, then $K'(\varphi)x \in C^2[0,1]$, and therefore
	\begin{align*}
		| K'(\varphi)(I-Q_n)K'(\varphi)x(s) | &\le \norm{\ell_*}_\infty \sum_{j=1}^{n} \int_{t_{j-1}}^{t_j} \norm{(I-Q_{n,j})K'(\varphi)x}_\infty ~dt  \\
		&\le   \norm{\ell_*}_\infty \sum_{j=1}^{n} \int_{t_{j-1}}^{t_j} C_3 \norm{(K'(\varphi)x)^{(2)}}_\infty h^2 ~dt. 
	\end{align*}
	From the estimate \eqref{Eq: 04}, we have 
	\begin{align*}
		| K'(\varphi)(I-Q_n)K'(\varphi)x(s) | 
		&\le   \norm{\ell_*}_\infty \sum_{j=1}^{n} \int_{t_{j-1}}^{t_j} C_1 C_3  \norm{x}_\infty h^2 ~dt = C_1 C_3 \norm{\ell_*}_\infty \norm{x}_\infty h^2.
	\end{align*}
	This proves the required result, where $C_{12}= C_1 C_3  \norm{\ell_*}_\infty < \infty$. Thus,
	\begin{equation*}
		\norm{K'(\varphi)(I-Q_n)K'(\varphi)} = \sup_{\norm{x}_\infty \le 1}  \left\{\sup_{s \in [0,1] } | K'(\varphi)(I-Q_n)K'(\varphi)x(s) |\right\}= O\left(h^2\right), 
	\end{equation*} 
	which completes the proof.
\end{proof}
\begin{remark} \label{r01}
	If $x \in C[0,1]$, then it can be shown that
	$$\norm{K'(\varphi)(I-Q_n)K''(\varphi) } =  O\left(h^{2}\right),$$
	and 
	$$\norm{K'(\varphi)(I-Q_n)K^{(3)}(\varphi) } =  O\left(h^{2}\right).$$
\end{remark}

Further in this section, we will now establish convergence orders for collocation and iteration collocation solutions (i.e. $\varphi_n^C$ and $\varphi_n^S$). Then, in the next subsection, we will determine convergence orders for both the modified collocation solution $\varphi_n^M$ and the iterated modified collocation solution $\widetilde{\varphi}_n^M$.
\subsection{Collocation and iterated collocation method}
We find the orders of convergence for collocation solution of \eqref{Eq: 020} and iterated collocation solution defined as \eqref{Eq: 021} by using the following error estimates from Atkinson-Potra \cite{KEA1}, 
$$
\norm{\varphi - \varphi_n^C }_\infty \leq C_{13} \norm{(I - Q_n)\varphi}_\infty
$$
and
$$
\norm{\varphi - \varphi_n^S }_\infty \leq  C_{13} \norm{K'(\varphi)(I - Q_n)\varphi}_\infty,
$$
where $C_{13}$ is a constant independent of $n$. Therefore, by inequality \eqref{Eq: 07} and Proposition \ref{p01}, we have
\begin{equation*}
	\norm{\varphi - \varphi_n^C }_\infty = O\left(h^{2r + 1}\right), \quad \norm{\varphi - \varphi_n^S }_\infty = O\left(h^{2r + 2}\right).
\end{equation*}
In this case, the one step iteration improves the order of convergence. 
\begin{note} \label{n01}
	Let the Urysohn integral equation \eqref{Eq: 02} is uniquely solvable for all $f \in \mathcal{X}$ and $K$ be the Urysohn integral operator defined by \eqref{Eq: 01} with a smooth kernel such that $\ell_{*} \in C^{2r + 2}([0,1] \times [0,1])$ and $Q_n$ be the interpolatory operator onto the approximating space $\mathcal{X}_n$ defined by \eqref{Eq: 06}. Then, by Rane \cite{RANE}, it can be shown that
	$$
	\norm{\varphi - \varphi_n^C }_\infty = O\left(h^{2r + 1}\right), \quad \norm{\varphi - \varphi_n^S }_\infty = O\left(h^{2r + 2}\right).
	$$
\end{note}
\subsection{Modified collocation method}
We begin by establishing the following key result, which plays a crucial role in proving Theorem \ref{t01}, the main theorem of this section. Henceforth, we assume that $n_0$ is a positive integer such that for all $n \geq n_0$, it follows that $Q_n\varphi \in \mathcal{B}(\varphi, \delta_0)$.
\begin{lemma} \label{l03}
	Let $r \ge 1$ and $f \in C^{2r+1}[0,1]$. Then
	\begin{eqnarray*}
		\norm{(I-Q_n) \big [K(Q_n \varphi)-K(\varphi)-K'(\varphi)(Q_n \varphi -\varphi)\big ]}_\infty =  O\left(h^{4r+3}\right).
	\end{eqnarray*}
\end{lemma}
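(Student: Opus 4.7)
The plan is to represent the bracketed expression as a Taylor remainder with a Green's-type kernel of the same structure as $\ell_*$, and then extract one extra factor of $h$ from $I-Q_n$ via an oscillation-on-subintervals argument rather than by bounding high-order derivatives of the remainder (which would fail due to the jump on $s=t$).

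First I would set $e_n = Q_n\varphi - \varphi$ and invoke Taylor's formula for $K$ (applicable since $Q_n\varphi \in \mathcal{B}(\varphi,\delta_0)$ for $n\geq n_0$) to write
\[
\psi_n(s) := K(Q_n\varphi)(s) - K(\varphi)(s) - (K'(\varphi)e_n)(s) = \int_0^1 G_n(s,t)\,e_n(t)^2\,dt,
\]
with $G_n(s,t) = \int_0^1(1-\theta)\,m\bigl(s,t,\varphi(t)+\theta e_n(t)\bigr)\,d\theta$. By the smoothness hypotheses on $m$, the kernel $G_n$ inherits the Green's-type structure of $m$: it is uniformly bounded on $[0,1]^2$ and, on each of the closed regions $S_1,S_2$, it is differentiable in $s$ with $\|\partial_s G_n\|_{\infty,S_i}$ bounded uniformly in $n$, because $\varphi+\theta e_n$ stays in a fixed compact set. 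Since $f \in C^{2r+1}[0,1]$ gives $\varphi \in C^{2r+1}[0,1]$, estimate \eqref{Eq: 07} yields $\|e_n\|_\infty \leq Ch^{2r+1}$ and hence $\|e_n^2\|_\infty \leq Ch^{4r+2}$.

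Next I would estimate the oscillation of $\psi_n$ on each $\Delta_i=[t_{i-1},t_i]$. For $s_1,s_2\in\Delta_i$, the diagonal piece
\[
\int_{\Delta_i}[G_n(s_1,t)-G_n(s_2,t)]\,e_n(t)^2\,dt
\]
is controlled using only the uniform bound on $G_n$ and the length $h$ of $\Delta_i$, giving $2\|G_n\|_\infty\|e_n\|_\infty^2\cdot h = O(h^{4r+3})$. For $j\neq i$, every $t\in\Delta_j$ lies strictly on one side of $s_1,s_2\in\Delta_i$ relative to the diagonal $s=t$, so $s\mapsto G_n(s,t)$ is smooth across that segment and the mean value theorem gives $|G_n(s_1,t)-G_n(s_2,t)|\leq Ch$; summing $Ch\cdot\|e_n\|_\infty^2\cdot h$ over the $O(1/h)$ subintervals also yields $O(h^{4r+3})$. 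Thus $\mathrm{osc}(\psi_n,\Delta_i)=O(h^{4r+3})$, uniformly in $i$.

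Finally, on $\Delta_i$ the difference $(I-Q_n)\psi_n = \psi_n - Q_{n,i}\psi_n$ vanishes at the $2r+1$ equidistant nodes $\tau_i^0,\ldots,\tau_i^{2r}$; using that $Q_{n,i}$ preserves constants and that its operator norm equals the Lebesgue constant $\Lambda_r$ for interpolation at $2r+1$ equidistant points (scale-invariant and therefore independent of $h$), I would conclude
\[
\|(I-Q_n)\psi_n\|_{\infty,\Delta_i} \leq (1+\Lambda_r)\,\|\psi_n - \psi_n(\tau_i^0)\|_{\infty,\Delta_i} \leq (1+\Lambda_r)\,\mathrm{osc}(\psi_n,\Delta_i) = O(h^{4r+3}),
\]
and maximizing over $i$ completes the proof. \textbf{The hard part} is handling the diagonal piece correctly: a naive $(2r+1)$-derivative-based interpolation bound on $\psi_n$ breaks because $\partial_s G_n$ jumps across $s=t$, so one is forced to bypass derivatives on $\Delta_i$ and use oscillation together with the $h$-length of the interval, while the mean value theorem is available on the $n-1$ off-diagonal pieces. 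These two contributions balance to gain precisely one factor of $h$ beyond the trivial bound $\|\psi_n\|_\infty = O(h^{4r+2})$, which is exactly what the lemma claims.
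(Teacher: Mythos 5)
Your proof is correct, but it follows a genuinely different route from the paper's. The paper Taylor-expands to second order \emph{about $\varphi$}, so the quadratic term is $\tfrac12 K''(\varphi)(Q_n\varphi-\varphi)^2$ with the fixed kernel $m(s,t,\varphi(t))$; the extra factor of $h$ is then extracted by applying the first-order interpolation bound \eqref{Eq: 07} together with the derivative estimate \eqref{Eq: 05}, while the cubic remainder $R(Q_n\varphi-\varphi)=O(h^{6r+3})$ is disposed of by the uniform boundedness of $\{Q_n\}$ alone. You instead keep the exact integral-form quadratic remainder $\psi_n(s)=\int_0^1 G_n(s,t)e_n(t)^2\,dt$ with the $n$-dependent kernel $G_n$, and gain the factor of $h$ by an oscillation argument: constant-reproduction of $Q_{n,i}$ plus a scale-invariant Lebesgue constant reduce the problem to bounding $\mathrm{osc}(\psi_n,\Delta_i)$, which you split into the diagonal subinterval (handled by boundedness of $G_n$ and $|\Delta_i|=h$) and the off-diagonal subintervals (handled by the mean value theorem in $s$, legitimate there since the segment does not cross $s=t$). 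Each approach buys something: yours needs only $\partial^2\kappa/\partial u^2$ and the piecewise $s$-regularity of $m$, avoiding the third $u$-derivative of $\kappa$ that the paper's remainder term requires, and it makes explicit why the diagonal discontinuity is harmless; the paper's is shorter because it reuses the already-established machinery \eqref{Eq: 05} and \eqref{Eq: 07}. The only points worth stating explicitly in your write-up are (i) that $\partial_s G_n$ is bounded uniformly in $n$ because $\partial_s m_i$ is continuous on $\Omega_i$ restricted to the compact set $|u|\le\norm{\varphi}_\infty+\delta_0$ and only the first slot of $m_i$ depends on $s$, and (ii) the Fubini interchange turning the operator-valued Taylor remainder into the kernel $G_n$; both are routine.
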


\begin{proof}   The generalized Taylor's theorem gives
	\begin{align*} 
		&K(Q_n \varphi)-K(\varphi)-K'(\varphi)(Q_n \varphi -\varphi) = \frac{1}{2} K''(\varphi)(Q_n \varphi -\varphi)^2  +R(Q_n \varphi -\varphi),
	\end{align*}
	with 
	\begin{equation}  \label {Eq: 026}
		R(Q_n \varphi -\varphi)(s)=\int_0^1 \left[K^{(3)}(\varphi + \theta (Q_n \varphi -\varphi))(Q_n \varphi -\varphi)^3\right](s) \dfrac{(1-\theta)^2}{2} ~ d\theta, \;\; s \in [0,1].
	\end{equation}
	Recall that $\dfrac{\partial^3 \kappa}{\partial u^3}$ is continuous on $\Omega$. Then the operator $K$ is three times Fr\'echet differentiable and the third Fr\'echet derivative of $K$ is given by,
	\begin{equation*} 
		\left[K^{(3)}(\varphi + \theta (Q_n \varphi -\varphi))(Q_n \varphi -\varphi)^3\right](s) 
		= \int_0^1 \dfrac{\partial^3 \kappa}{\partial u^3}(s,t,\varphi(t)+ \theta (Q_n \varphi -\varphi)(t)) (Q_n \varphi -\varphi)^3(t)~dt.
	\end{equation*}
	Then
	\begin{equation*} 
		\left|\left[K^{(3)}(\varphi + \theta (Q_n \varphi -\varphi))(Q_n \varphi -\varphi)^3\right](s)\right| \le \left( \sup_{\substack{s,t \in [0,1] \\ |u| \le \norm{\varphi}_\infty +\delta_0}}\left| \dfrac{\partial^3 \kappa}{\partial u^3}(s,t,u)\right| \right) \norm{Q_n \varphi -\varphi}_{\infty}^3.
	\end{equation*}
	Thus,
	\begin{equation*} 
		\norm{\left[K^{(3)}(\varphi + \theta (Q_n \varphi -\varphi))(Q_n \varphi -\varphi)^3\right]}_\infty \le C_1 \norm{Q_n \varphi -\varphi}_\infty^3.
	\end{equation*}
	Therefore, equation \eqref{Eq: 026} gives
	\begin{equation} \label {Eq: 027}
		\norm{ R(Q_n \varphi -\varphi)}_\infty \le \dfrac{C_1}{6} \norm{Q_n \varphi -\varphi}_\infty^3.
	\end{equation}
	Note that
	\begin{equation} \label {Eq: 028}
		(I-Q_n) [K(Q_n \varphi)-K(\varphi)-K'(\varphi)(Q_n \varphi -\varphi)] =  \frac{1}{2}(I-Q_n) K''(\varphi)(Q_n \varphi -\varphi)^2  + (I-Q_n)R(Q_n \varphi -\varphi).   
	\end{equation}
	Since $f \in C^{2r+1}[0,1]$, $\varphi \in C^{2r+1}[0,1]$. Thus $\norm{Q_n \varphi -\varphi}_\infty = O\left(h^{2r+1}\right)$. As $Q_n \varphi -\varphi \in C[0,1]$, therefore $K''(\varphi)(Q_n \varphi -\varphi)^2 \in C^2[0,1]$. Hence,
	\begin{align*} 
		\norm{(I-Q_n) K''(\varphi)(Q_n \varphi -\varphi)^2}_\infty \le C_3 \norm{( K''(\varphi) \left(Q_n \varphi -\varphi)^2\right)'}_\infty h.
	\end{align*}
	From \eqref{Eq: 05}, we have
	\begin{align*}
		\norm{( K''(\varphi)\left(Q_n \varphi -\varphi)^2\right)'}_\infty \le C_1 \norm{Q_n \varphi -\varphi}_\infty^2.
	\end{align*}
	Therefore,
	\begin{equation*} 
		\norm{(I-Q_n) K''(\varphi)(Q_n \varphi -\varphi)^2}_\infty = O\left(h^{4r +3}\right).
	\end{equation*}
	On the other hand, since $\left\{Q_n \right\}$ is uniformly bounded and using \eqref{Eq: 027}, we obtain
	\begin{equation*} 
		\norm{(I-Q_n)R(Q_n \varphi -\varphi)}_\infty = O\left(h^{6r+3}\right).
	\end{equation*}
	Hence, by employing the above two inequalities in \eqref{Eq: 028}, the required estimate as
	\begin{equation*} 
		\norm{(I-Q_n) \big [K(Q_n \varphi)-K(\varphi)-K'(\varphi)(Q_n \varphi -\varphi)\big]}_\infty =  O\left(h^{4r+3}\right).
	\end{equation*}
\end{proof}


\begin{theorem} \label{t01}
	Let $K$ be a Urysohn integral operator defined as \eqref{Eq: 01} with Green's function type kernel and $f \in C^{2r +2}[0,1]$ be a given function. Consider the equation \eqref{Eq: 02} with $\varphi$ as the unique solution in $\mathcal{B}(\varphi, \delta_0)$. Let $Q_n$ be the interpolatory operator onto the approximating space $\mathcal{X}_n$ defined by \eqref{Eq: 06}. Let $\varphi_n^M \in \mathcal{B}(\varphi, \delta_0)$ be the modified collocation solution of \eqref{Eq: 022}. Then
	\begin{eqnarray*}
		\norm{\varphi - \varphi_n^{M}}_\infty =  	\begin{cases}
			O\left( h^{3} \right), ~~ & r = 0, \\
			O\left( h^{2r+2} \right), ~~ & r \geq 1.
		\end{cases}
	\end{eqnarray*}
\end{theorem}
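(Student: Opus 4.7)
The plan is to derive a linearised error equation and reduce the problem to bounding the consistency error $\|K(\varphi) - K_n^M(\varphi)\|_\infty$. Subtracting \eqref{Eq: 022} from \eqref{Eq: 02} yields $\varphi - \varphi_n^M = K(\varphi) - K_n^M(\varphi_n^M)$, and a generalised Taylor expansion of $K_n^M$ about $\varphi$ produces
\begin{equation*}
  [I - (K_n^M)'(\varphi)](\varphi - \varphi_n^M) = [K(\varphi) - K_n^M(\varphi)] + O\!\left(\|\varphi - \varphi_n^M\|_\infty^2\right).
\end{equation*}
Standard perturbation arguments (using $1 \notin \sigma(K'(\varphi))$, compactness of $K'(\varphi)$, and the pointwise convergence of $(K_n^M)'(\varphi)$ to $K'(\varphi)$) show that for $n$ large enough, $I - (K_n^M)'(\varphi)$ is invertible with uniformly bounded inverse. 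The quadratic remainder self-absorbs once $\|\varphi - \varphi_n^M\|_\infty \to 0$, so the rate is governed by $\|K(\varphi) - K_n^M(\varphi)\|_\infty$.

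Next, I would exploit the algebraic identity $K_n^M(\varphi) = Q_n K(\varphi) + (I - Q_n) K(Q_n \varphi)$ to rewrite $K(\varphi) - K_n^M(\varphi) = (I - Q_n)[K(\varphi) - K(Q_n\varphi)]$. The Taylor expansion $K(Q_n\varphi) - K(\varphi) = K'(\varphi)(Q_n\varphi - \varphi) + R(Q_n\varphi - \varphi)$ together with Lemma \ref{l03} shows that $(I - Q_n) R$ is $O(h^{4r+3})$, which is strictly smaller than either target rate. The dominant part of the consistency error is therefore $-(I - Q_n) K'(\varphi)(I - Q_n)\varphi$, and the theorem reduces to bounding this single quantity in the $L^\infty$-norm.

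For this bound I would treat the two regimes separately. When $r \geq 1$, the uniform boundedness of $\{Q_n\}$ combined with Proposition \ref{p01} is already enough:
\begin{equation*}
  \|(I - Q_n) K'(\varphi)(I - Q_n)\varphi\|_\infty \leq (1 + \|Q_n\|)\,\|K'(\varphi)(I - Q_n)\varphi\|_\infty = O(h^{2r+2}).
\end{equation*}
When $r = 0$, the same estimate only delivers $O(h^2)$ and is not sharp enough. In this case $Q_n$ is midpoint interpolation onto piecewise constants, so on each $\Delta_j$ we have $(I - Q_n)w(t) = w(t) - w(\tau_j)$ with $w = K'(\varphi)(I - Q_n)\varphi$. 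Expanding $w$ about the midpoint,
\begin{equation*}
  w(t) - w(\tau_j) = w'(\tau_j)(t - \tau_j) + \tfrac{1}{2}\, w''(\xi)(t - \tau_j)^2,
\end{equation*}
one obtains $\|(I - Q_n)w\|_\infty \leq (h/2)\|w'\|_\infty + (h^2/8)\|w''\|_\infty$. Proposition \ref{p01} gives $\|w\|_\infty = O(h^2)$; an analogue of Proposition \ref{p01} applied to the differentiated kernel $D^{(1,0)}\ell_*$ (still of Green's type within $\Omega_1,\Omega_2$, with a jump across $s = t$ handled by splitting the integral at $t = s$) gives $\|w'\|_\infty = O(h^2)$; and a direct pointwise estimate accounting for the boundary jump $D^{(1,0)}\ell_{1,*}(s,s) - D^{(1,0)}\ell_{2,*}(s,s)$ gives $\|w''\|_\infty = O(h)$. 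Combining, $\|(I - Q_n)w\|_\infty = O(h^3)$.

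The hardest part will be the $r = 0$ case, in which the cheap operator-norm bound for $(I - Q_n)$ is no longer sharp. This forces a finer Taylor analysis of $w = K'(\varphi)(I - Q_n)\varphi$ at the midpoint, and in particular a separate estimate of $\|w'\|_\infty$ through an argument parallel to Proposition \ref{p01} but for a kernel that is only piecewise continuous and exhibits a jump along the diagonal $s = t$. Once both sub-cases of the key bound are established, substituting back into the error equation and using the uniformly bounded inverse of $I - (K_n^M)'(\varphi)$ yields the stated rates.
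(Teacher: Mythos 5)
Your argument for $r \ge 1$ is essentially the paper's proof: the paper quotes the bound $\norm{\varphi - \varphi_n^M}_\infty \le 4\norm{(I - K'(\varphi))^{-1}}\,\norm{K(\varphi) - K_n^M(\varphi)}_\infty$ from Grammont et al.\ rather than rederiving the linearisation as you do, and then uses exactly your decomposition $K(\varphi) - K_n^M(\varphi) = -(I-Q_n)\bigl[K(Q_n\varphi) - K(\varphi) - K'(\varphi)(Q_n\varphi - \varphi)\bigr] + (I-Q_n)K'(\varphi)(I-Q_n)\varphi$, with Lemma \ref{l03} for the first term and uniform boundedness of $\{Q_n\}$ plus Proposition \ref{p01} for the second. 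Where you genuinely diverge is the case $r = 0$: the paper simply cites \cite{RPKTJN} there, whereas you give a self-contained midpoint-Taylor argument for $\norm{(I-Q_n)w}_\infty$ with $w = K'(\varphi)(I-Q_n)\varphi$. Your sketch is workable: the diagonal boundary terms in $w'$ cancel because $\ell_*$ is assumed continuous across $s = t$; the Proposition~\ref{p01}-type moment argument applies on every subinterval not containing $s$, while on the one containing $s$ a crude bound already gives $O(h^2)$, so the jump of $D^{(1,0)}\ell_*$ causes no trouble; and $\norm{w''}_\infty = O(h)$ follows from the boundedness of the diagonal jump term and of $(I-Q_n)\varphi$. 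Two small points to tidy up: Lemma \ref{l03} is stated only for $r \ge 1$, so you must check (as is routine from its proof) that it still yields $O(h^{4r+3}) = O(h^3)$ at $r = 0$ --- and note that at $r=0$ this term is \emph{equal} to the target rate $h^3$, not strictly smaller as you claim; and $w''$ is only piecewise defined, inheriting the discontinuities of $(I-Q_n)\varphi$ at the knots, which is harmless since your Taylor expansion is performed subinterval by subinterval but deserves a sentence.
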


\begin{proof}Case I: When $r = 0$. See \cite{RPKTJN}.\\
	Case II: When $r \geq 1$. \\
	 We write the following estimate from Grammont et al \cite{LGRPK2}: 
	\begin{align} \label {Eq: 029}
		\norm{\varphi - \varphi_n^{M}}_\infty \le 4 \norm{(I-K'(\varphi))^{-1}} \norm{[K(\varphi) - K_n^{M}(\varphi)]}_\infty.
	\end{align} 
	Further consider, 
	\begin{align} \label {Eq: 030}
		K(\varphi)-K_n^{M}(\varphi) &= (I-Q_n)(K(\varphi)-K(Q_n\varphi)) \notag \\
		&=-(I-Q_n) [K(Q_n \varphi)-K(\varphi)-K'(\varphi)(Q_n \varphi -\varphi)]+(I-Q_n)K'(\varphi)(I-Q_n)\varphi.
	\end{align}
	Now, 
	\begin{align*} 
		\norm{(I-Q_n)K'(\varphi)(I-Q_n)\varphi}_\infty \le \norm{I-Q_n} \norm{K'(\varphi)(I-Q_n)\varphi}_\infty.
	\end{align*}
	 Uniformly boundedness $\left\{Q_n \right\}$ of and Proposition \ref{p01} implies
	\begin{align*} 
		\norm{(I-Q_n)K'(\varphi)(I-Q_n)\varphi}_\infty = O\left(h^{2r+2}\right).
	\end{align*}
	Applying above estimate and Lemma \ref{l03} in equation \eqref{Eq: 030}, we obtain
	\begin{align*} 
		\norm{	K(\varphi)-K_n^{M}(\varphi)}_\infty = O\left(h^{2r+2}\right).
	\end{align*}
	Using the above estimate in \eqref{Eq: 029}, we obtained the desired result as
	\begin{eqnarray*}
		\norm{\varphi - \varphi_n^{M}}_\infty =  O\left(h^{2r+2}\right).
	\end{eqnarray*}
This proves the theorem.	
\end{proof}
\begin{note} \label{n02}
	Let $K$ be the Urysohn integral operator defined by \eqref{Eq: 01} with a smooth kernel such that $\ell_{*} \in C^{4r +2}([0,1] \times [0,1])$ and $Q_n$ be the interpolatory operator onto the approximating space $\mathcal{X}_n$ defined by \eqref{Eq: 06}. If $f \in C^{4r}[0,1]$, then by Rane \cite{RANE}
\begin{equation*}
	\norm{(I - Q_n) K'(\varphi) (I -Q_n)\varphi}_\infty = O\left(h^{4r+3}\right),
\end{equation*}
	and from Grammont et al \cite[Lemma 2.1]{LGRPK2}, we get
	\begin{equation*}
		\norm{(I-Q_n) \big [K(Q_n \varphi)-K(\varphi)-K'(\varphi)(Q_n \varphi -\varphi)\big]}_\infty =  O\left(h^{6r+3}\right).
	\end{equation*}
	Thus, applying these estimates to \eqref{Eq: 029} first and subsequently to \eqref{Eq: 030}, we obtain
	$	\displaystyle{\norm{\varphi - \varphi_n^{M} }_\infty =  O\left(h^{4r+3}\right)}	$.
\end{note}

\subsection{Iterated modified collocation method}
In this subsection, we establish our primary result. We demonstrate that the convergence rate in the iterated modified collocation method is higher than the modified collocation methods. We proceed to prove a series of intermediate results essential for the proof of our main theorem. 

\begin{lemma} \label{l04}
	Let $r \ge 1$ and $f \in C^{2r+1}[0, 1]$, then
	\begin{eqnarray*}
		\norm{K'(\varphi)(I-Q_n) \big [K(Q_n \varphi)-K(\varphi)-K'(\varphi)(Q_n \varphi -\varphi)\big ]}_\infty =  O\left(h^{4r+4}\right).
	\end{eqnarray*}
\end{lemma}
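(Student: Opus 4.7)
The plan is to combine the generalized Taylor expansion already used in the proof of Lemma \ref{l03} with the smoothing estimate recorded in Remark \ref{r01}. By \eqref{Eq: 028},
$$K(Q_n\varphi) - K(\varphi) - K'(\varphi)(Q_n\varphi - \varphi) = \tfrac{1}{2}\, K''(\varphi)(Q_n\varphi - \varphi)^2 + R(Q_n\varphi - \varphi),$$
with $R$ defined by \eqref{Eq: 026}. Premultiplying by $K'(\varphi)(I-Q_n)$ and applying the triangle inequality reduces the task to estimating the quadratic piece $K'(\varphi)(I-Q_n)K''(\varphi)(Q_n\varphi - \varphi)^2$ and the remainder piece $K'(\varphi)(I-Q_n)R(Q_n\varphi - \varphi)$ separately.

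For the quadratic term, view $K'(\varphi)(I-Q_n)K''(\varphi)$ as a bilinear operator on $\mathcal{X}^2$; Remark \ref{r01} gives $\norm{K'(\varphi)(I-Q_n)K''(\varphi)} = O(h^{2})$. Since $f \in C^{2r+1}[0,1]$ implies $\varphi \in C^{2r+1}[0,1]$, estimate \eqref{Eq: 07} yields $\norm{Q_n\varphi - \varphi}_\infty = O(h^{2r+1})$. Evaluating the bilinear operator at the diagonal pair $(Q_n\varphi - \varphi,\, Q_n\varphi - \varphi)$ therefore gives
$$\norm{K'(\varphi)(I-Q_n)K''(\varphi)(Q_n\varphi - \varphi)^{2}}_\infty \le \norm{K'(\varphi)(I-Q_n)K''(\varphi)} \cdot \norm{Q_n\varphi - \varphi}_\infty^{2} = O(h^{2}) \cdot O(h^{4r+2}) = O(h^{4r+4}).$$
For the remainder, I would quote the bound $\norm{R(Q_n\varphi - \varphi)}_\infty \le (C_1/6)\norm{Q_n\varphi - \varphi}_\infty^{3} = O(h^{6r+3})$ established as \eqref{Eq: 027}. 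Combined with the uniform boundedness of $\{I-Q_n\}$ and boundedness of $K'(\varphi)$, a direct operator-norm bound yields $\norm{K'(\varphi)(I-Q_n)R(Q_n\varphi - \varphi)}_\infty = O(h^{6r+3})$, which for every $r \ge 1$ satisfies $6r+3 \ge 4r+4$ and is thus absorbed into $O(h^{4r+4})$. Summing the two contributions completes the proof.

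The only subtle point — and the main obstacle — is to interpret Remark \ref{r01} correctly: $K''(\varphi)$ must be viewed as a bounded bilinear map so that the $O(h^{2})$ operator-norm bound pairs naturally with the \emph{square} of $\norm{Q_n\varphi - \varphi}_\infty$. This extra factor of $h^{2}$, supplied by Remark \ref{r01} in place of the naive $O(1)$ bound on $\norm{K'(\varphi)(I-Q_n)}$ used in Lemma \ref{l03}, is precisely what upgrades the order from $h^{4r+3}$ to $h^{4r+4}$.
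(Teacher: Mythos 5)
Your proof is correct and rests on the same key mechanism as the paper's: pairing the $O(h^{2})$ bound $\norm{K'(\varphi)(I-Q_n)K''(\varphi)}=O(h^{2})$ from Remark \ref{r01} with $\norm{Q_n\varphi-\varphi}_\infty^{2}=O(h^{4r+2})$ to extract the order $h^{4r+4}$ from the quadratic term. The only difference is in the handling of the remainder: the paper expands the Taylor formula one order further, writing the left-over as $\tfrac{1}{6}K^{(3)}(\varphi)(Q_n\varphi-\varphi)^{3}+R$ with a fourth-order remainder, and then applies Remark \ref{r01} again to the cubic term (getting $O(h^{6r+5})$) and a crude operator bound to $R$ (getting $O(h^{8r+4})$). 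You instead stop at the second-order expansion of Lemma \ref{l03} and bound the whole third-order remainder crudely by $O(\norm{Q_n\varphi-\varphi}_\infty^{3})=O(h^{6r+3})$, observing that $6r+3\ge 4r+4$ exactly when $r\ge 1$, which is the standing hypothesis. This is slightly leaner — it needs only the $K''$ half of Remark \ref{r01} and the assumption $\tfrac{\partial^{3}\kappa}{\partial u^{3}}\in C(\Omega)$ rather than the fourth derivative — at the cost of a remainder estimate that is tight at $r=1$ (margin $h^{2r-1}$) rather than uniformly generous. One cosmetic slip: the unprojected Taylor identity you invoke is the display preceding \eqref{Eq: 026}, not \eqref{Eq: 028} itself, which already carries the factor $(I-Q_n)$; this does not affect the argument.
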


\begin{proof}   Let $n_0$ be a positive integer such that $n \ge n_0$ implies that
	$Q_n \varphi \in \mathcal{B}(\varphi,\delta_0).$ Then by  generalized Taylor's theorem,
	\begin{equation*} 
		K(Q_n \varphi)-K(\varphi)-K'(\varphi)(Q_n \varphi -\varphi) = \frac{1}{2} K''(\varphi)(Q_n \varphi -\varphi)^2 + \frac{1}{3!} K^{(3)}(\varphi)(Q_n \varphi -\varphi)^3 +R(Q_n \varphi -\varphi),
	\end{equation*}
	where
	\begin{equation}  \label {Eq: 031}
		R(Q_n \varphi -\varphi)(s)=\int_0^1 \left[K^{(4)}(\varphi + \theta (Q_n \varphi -\varphi))(Q_n \varphi -\varphi)^4\right](s) \dfrac{(1-\theta)^3}{6} ~ d\theta, ~ s \in [0,1].
	\end{equation}
Now	\begin{equation*} 
		\left[K^{(4)}(\varphi + \theta (Q_n \varphi -\varphi))(Q_n \varphi -\varphi)^4\right](s) 
		= \int_0^1 \dfrac{\partial^4 \kappa}{\partial u^4}\left(s,t,\varphi(t)+ \theta (Q_n \varphi -\varphi)(t)\right)  (Q_n \varphi -\varphi)^4(t)~dt.
	\end{equation*}
	It follows that
	\begin{equation*}
		\left|\left[K^{(4)}(\varphi + \theta (Q_n \varphi -\varphi))(Q_n \varphi -\varphi)^4\right](s)\right| \le  \left( \sup_{\substack{s,t \in [0,1] \\ |u| \le \norm{\varphi}_\infty +\delta_0}} \left| \dfrac{\partial^4 \kappa}{\partial u^4}(s,t,u)\right| \right) \norm{Q_n \varphi -\varphi}_\infty^4.
	\end{equation*}
	Thus, $\displaystyle{\norm{\left[K^{(4)}(\varphi + \theta (Q_n \varphi -\varphi))(Q_n \varphi -\varphi)^4\right]}_\infty \le C_1 \norm{Q_n \varphi -\varphi}_\infty^4}$ and, therefore the equation \eqref{Eq: 031} gives	\begin{equation*} 
		\norm{ R(Q_n \varphi -\varphi)}_\infty \le \dfrac{C_1}{24} \norm{Q_n \varphi -\varphi}_\infty^4.
	\end{equation*}
	Note that
	\begin{align} \label {Eq: 032}
		K'(\varphi)(I-Q_n) [K(Q_n \varphi)-K(\varphi)-K'(\varphi)(Q_n \varphi -\varphi)] &=  \frac{1}{2}	K'(\varphi)(I-Q_n) K''(\varphi)(Q_n \varphi -\varphi)^2 \nonumber \\ &+ \frac{1}{6} K'(\varphi)(I-Q_n) K^{(3)}(\varphi)(Q_n \varphi -\varphi)^3  \nonumber \\ &+ K'(\varphi)(I-Q_n)R(Q_n \varphi -\varphi).   
	\end{align}
Now, from the Remark \ref{r01} and the equation \eqref{Eq: 07}, we obtain
	\begin{equation} \label {Eq: 033}
		\norm{K'(\varphi)(I-Q_n) K''(\varphi)(Q_n \varphi -\varphi)^2}_\infty \le \norm{K'(\varphi)(I-Q_n) K''(\varphi)} \norm{Q_n \varphi -\varphi}_{\infty}^2  
		= O\left(h^{4r+4}\right),
	\end{equation}
	\begin{align} \label {Eq: 034}
		\norm{K'(\varphi)(I-Q_n) K^{(3)}(\varphi)(Q_n \varphi -\varphi)^3}_\infty \le \norm{K'(\varphi)(I-Q_n)K^{(3)}(\varphi)} \norm{Q_n \varphi -\varphi}_{\infty}^3 = O\left(h^{6r+5}\right).
	\end{align}
	Also,
	\begin{equation*}
		\norm{K'(\varphi)(I-Q_n)R(Q_n \varphi -\varphi)}_\infty \le \norm{K'(\varphi)}\norm{I-Q_n}\norm{R(Q_n \varphi -\varphi)}_\infty \leq C_{14} \norm{R(Q_n \varphi -\varphi)}_\infty 
		=O\left( h^{8r+4} \right),
	\end{equation*}
	for some constant $C_{14}$.
	Hence, using \eqref{Eq: 033}, \eqref{Eq: 034} and the above estimate in \eqref{Eq: 032}, it follows that
	\begin{equation*} 
		\norm{K'(\varphi)(I-Q_n) \big [K(Q_n \varphi)-K(\varphi)-K'(\varphi)(Q_n \varphi -\varphi)\big]}_\infty =  O\left(h^{4r+4}\right).
	\end{equation*}
	This completes the proof.
\end{proof}

\begin{lemma} \label{l05}
	Let $r \ge 1$ and $f \in C^{2r+1}[0, 1]$, then
	\begin{eqnarray*}
		\norm{K'(\varphi) \big[K(\varphi)-K_n^{M}(\varphi)\big]}_\infty =  O\left(h^{2r+3}\right).
	\end{eqnarray*}
\end{lemma}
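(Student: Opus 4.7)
The plan is to reuse the decomposition of $K(\varphi)-K_n^M(\varphi)$ that already appeared in the proof of Theorem \ref{t01} and then hit it with $K'(\varphi)$, so that each summand can be controlled by a previously established smoothing estimate. Concretely, from the definition $K_n^M(\varphi) = Q_n K(\varphi) + K(Q_n\varphi) - Q_n K(Q_n\varphi) = Q_n K(\varphi) + (I-Q_n)K(Q_n\varphi)$, one has the identity
\begin{equation*}
K(\varphi) - K_n^M(\varphi) = -(I-Q_n)\bigl[K(Q_n\varphi) - K(\varphi) - K'(\varphi)(Q_n\varphi - \varphi)\bigr] + (I-Q_n) K'(\varphi)(I-Q_n)\varphi,
\end{equation*}
which is exactly equation \eqref{Eq: 030}. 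Applying the bounded linear operator $K'(\varphi)$ to both sides reduces the problem to estimating the two terms
\begin{equation*}
A_n := K'(\varphi)(I-Q_n)\bigl[K(Q_n\varphi) - K(\varphi) - K'(\varphi)(Q_n\varphi - \varphi)\bigr], \qquad B_n := K'(\varphi)(I-Q_n)K'(\varphi)(I-Q_n)\varphi.
\end{equation*}

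For $A_n$, I simply invoke Lemma \ref{l04}, which gives $\norm{A_n}_\infty = O(h^{4r+4})$. For $B_n$, I apply Proposition \ref{p02} with $x = \varphi$; since $f \in C^{2r+1}[0,1]$ implies $\varphi \in C^{2r+1}[0,1]$ by the smoothing result cited in the introduction, Proposition \ref{p02} yields $\norm{B_n}_\infty \le C_{11}\norm{\varphi}_{2r+1,\infty} h^{2r+3} = O(h^{2r+3})$. Combining these via the triangle inequality, and observing that $4r+4 \ge 2r+3$ for every $r\ge 1$, so that the $B_n$ term dominates, one obtains
\begin{equation*}
\norm{K'(\varphi)\bigl[K(\varphi) - K_n^M(\varphi)\bigr]}_\infty \le \norm{A_n}_\infty + \norm{B_n}_\infty = O(h^{2r+3}),
\end{equation*}
which is the desired estimate.

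There is no real obstacle beyond bookkeeping: the heavy lifting was done in Lemma \ref{l04} (the nonlinear remainder term) and Proposition \ref{p02} (the iterated linear smoothing estimate using the two divided-difference lemmas). The only subtle point worth double-checking is that the sharp exponent in Proposition \ref{p02} is indeed $2r+3$ and not $2r+2$, since that is what makes the gain of one power of $h$ over Theorem \ref{t01} visible; this gain is precisely what allows the iterated modified collocation scheme $\widetilde{\varphi}_n^M = K(\varphi_n^M)+f$ to converge one order faster than $\varphi_n^M$, and it will be used in the subsequent theorem bounding $\norm{\varphi - \widetilde{\varphi}_n^M}_\infty$.
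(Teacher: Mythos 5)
Your proof is correct and follows essentially the same route as the paper: the same decomposition \eqref{Eq: 030} of $K(\varphi)-K_n^{M}(\varphi)$, with the first term controlled by Lemma \ref{l04} and the second by Proposition \ref{p02} applied to $\varphi \in C^{2r+1}[0,1]$. Nothing is missing; the only difference is that you spell out the comparison $4r+4 \ge 2r+3$ explicitly, which the paper leaves implicit.
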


\begin{proof}   We have $K_n^{M}(\varphi)=Q_n K(\varphi) +K(Q_n \varphi)-Q_nK(Q_n \varphi)$. Consider 
	\begin{align*} 
		K(\varphi)-K_n^{M}(\varphi) &= (I-Q_n)(K(\varphi)-K(Q_n\varphi)) \notag \\
		&=-(I-Q_n) [K(Q_n \varphi)-K(\varphi)-K'(\varphi)(Q_n \varphi -\varphi)]  +(I-Q_n)K'(\varphi)(I-Q_n)\varphi.
	\end{align*}
	Thus, we have
		\begin{align} \label {Eq: 035}
			\norm{K'(\varphi) \big[K(\varphi)-K_n^{M}(\varphi)\big]}_\infty &\le \norm{K'(\varphi)(I-Q_n) \big[K(Q_n \varphi)-K(\varphi)-K'(\varphi)(Q_n \varphi -\varphi)\big]}_\infty \notag \\
			&\quad +\norm{K'(\varphi)(I-Q_n)K'(\varphi)(I-Q_n)\varphi}_\infty.
		\end{align}
	By Proposition \ref{p02}, we have
	\begin{equation*} 
		\norm{K'(\varphi)(I-Q_n)K'(\varphi)(I-Q_n)\varphi}_\infty =O\left( h^{2r+3} \right).
	\end{equation*}
	Hence the required result follows from the above equation, the Lemma \ref{l04} and \eqref{Eq: 035}.
	
\end{proof}


\begin{lemma} \label{l06}
	Let $r \ge 1$ and $f \in C^{2r+1}[0, 1]$, then
	\begin{eqnarray*}
		\norm{K'(\varphi) \left(\left(K_n^{M}\right)'(\varphi)-K'(\varphi)) \left( \varphi-\varphi_n^{M} \right) \right)}_\infty =  O\left(h^{2r+4}\right).
	\end{eqnarray*}
\end{lemma}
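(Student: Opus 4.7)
The plan is to begin by computing the Fr\'echet derivative of $K_n^M$ at $\varphi$. Since $K_n^M(x) = Q_n K(x) + K(Q_n x) - Q_n K(Q_n x)$ and $Q_n$ is a bounded linear projection, the chain rule yields
\[
(K_n^M)'(\varphi)\, y = Q_n K'(\varphi)\, y + K'(Q_n \varphi)\, Q_n y - Q_n K'(Q_n \varphi)\, Q_n y.
\]
Subtracting $K'(\varphi)\, y$, splitting $-(I - Q_n) K'(\varphi)\, y = -(I - Q_n) K'(\varphi)(I - Q_n)\, y - (I - Q_n) K'(\varphi)\, Q_n y$, and grouping the two $Q_n y$ contributions produces the identity
\[
\bigl((K_n^M)'(\varphi) - K'(\varphi)\bigr) y = -(I - Q_n) K'(\varphi)(I - Q_n)\, y + (I - Q_n)\bigl[K'(Q_n \varphi) - K'(\varphi)\bigr] Q_n y.
\]

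I would then apply $K'(\varphi)$ on the left, specialize $y = \varphi - \varphi_n^M$, and bound the two pieces separately. For the first, $-K'(\varphi)(I - Q_n) K'(\varphi)(I - Q_n)(\varphi - \varphi_n^M)$, Proposition \ref{p03} yields $\norm{K'(\varphi)(I - Q_n) K'(\varphi)} = O(h^2)$, while the uniform boundedness of $\{Q_n\}$ together with Theorem \ref{t01} gives $\norm{(I - Q_n)(\varphi - \varphi_n^M)}_\infty = O(h^{2r+2})$; the product is therefore $O(h^{2r+4})$.

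For the second piece, $K'(\varphi)(I - Q_n)\bigl[K'(Q_n \varphi) - K'(\varphi)\bigr] Q_n(\varphi - \varphi_n^M)$, only routine operator-norm bounds are needed: $\norm{K'(\varphi)}$, $\norm{I - Q_n}$, and $\norm{Q_n}$ are each uniformly bounded in $n$; the Lipschitz estimate \eqref{Eq: 03} combined with $\norm{Q_n \varphi - \varphi}_\infty = O(h^{2r+1})$ (from inequality \eqref{Eq: 07}, using $\varphi \in C^{2r+1}[0,1]$) gives $\norm{K'(Q_n \varphi) - K'(\varphi)} = O(h^{2r+1})$; and Theorem \ref{t01} supplies $\norm{\varphi - \varphi_n^M}_\infty = O(h^{2r+2})$. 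This piece is thus $O(h^{4r+3})$, which is $O(h^{2r+4})$ or smaller for every $r \ge 1$.

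The main subtlety lies in arriving at the clean splitting of $(K_n^M)'(\varphi) - K'(\varphi)$: a direct expansion leaves the first-order term $-(I - Q_n)K'(\varphi)\, y$, and the decomposition $y = (I - Q_n)y + Q_n y$ must be inserted in exactly the right place so that the $Q_n y$ contributions reassemble into the operator difference $K'(Q_n \varphi) - K'(\varphi)$ (tamed by the Lipschitz bound) while the $(I - Q_n) y$ contribution retains the second $(I - Q_n)$ factor required by Proposition \ref{p03}. Once this identity is in place, the rest of the proof is a direct combination of results already proved in the excerpt, without requiring any refinement of $\varphi - \varphi_n^M$ beyond its supremum-norm order.
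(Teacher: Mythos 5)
Your proof is correct and follows essentially the same route as the paper: both use the derivative formula $(K_n^M)'(\varphi)=Q_nK'(\varphi)+K'(Q_n\varphi)Q_n-Q_nK'(Q_n\varphi)Q_n$, Proposition \ref{p03}, the Lipschitz estimate \eqref{Eq: 03} with $\norm{Q_n\varphi-\varphi}_\infty=O(h^{2r+1})$, and Theorem \ref{t01}. The only difference is cosmetic — you merge the terms $-K'(\varphi)(I-Q_n)K'(\varphi)$ and $K'(\varphi)(I-Q_n)K'(\varphi)Q_n$ into the single factor $-K'(\varphi)(I-Q_n)K'(\varphi)(I-Q_n)$, whereas the paper bounds the operator norm $\norm{K'(\varphi)\left((K_n^M)'(\varphi)-K'(\varphi)\right)}=O(h^2)$ first and then multiplies by $\norm{\varphi-\varphi_n^M}_\infty$; the resulting estimates are identical.
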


\begin{proof}   We can write
	\begin{equation} \label {Eq: 036}
		K'(\varphi) \left( (K_n^{M})'(\varphi)-K'(\varphi) \right) = -K'(\varphi) (I-Q_n) K'(\varphi)  + K'(\varphi)(I-Q_n)K'(Q_n\varphi)Q_n.
	\end{equation}
	Also,
	\begin{equation*} 
		K'(\varphi)(I-Q_n)K'(Q_n\varphi)Q_n = K'(\varphi)(I-Q_n)(K'(Q_n \varphi)-K'(\varphi))Q_n  + K'(\varphi)(I-Q_n)K'(\varphi)Q_n.
	\end{equation*}
	Now, from the equation \eqref{Eq: 03}, we get
	\begin{equation*} 
		\norm{K'(Q_n \varphi)-K'(\varphi)} \le \gamma \norm{Q_n \varphi -\varphi}_\infty.
	\end{equation*}
	Therefore
	\begin{equation*}
		\norm{K'(\varphi)(I-Q_n)(K'(Q_n \varphi)-K'(\varphi))Q_n} \leq \norm{K'(\varphi)} \left(1 + \norm{Q_n}\right) \gamma \norm{Q_n} \norm{Q_n \varphi -\varphi}_\infty = O \left( h^{2r+1} \right).
	\end{equation*}
	Using the Proposition \ref{p02} and the above estimate, we obtain
	\begin{equation*} 
		\norm{K'(\varphi)(I-Q_n)K'(Q_n\varphi)Q_n} = O\left(h^{2}\right).
	\end{equation*}
	Further, using Proposition \ref{p02} and the above estimate in equation \eqref{Eq: 036}, we have
	\begin{equation*} 
		\norm{K'(\varphi) \left( (K_n^{M})'(\varphi)-K'(\varphi) \right)} = O\left(h^2\right).
	\end{equation*}
	Hence, 
		\begin{equation*}
			\norm{K'(\varphi) \big((K_n^{M})'(\varphi)-K'(\varphi))(\varphi-\varphi_n^{M}) \big)}_\infty \le \norm{K'(\varphi) \big((K_n^{M})'(\varphi)-K'(\varphi))} \norm{\varphi_n^{M}- \varphi}_\infty,
		\end{equation*}
and the Theorem \ref{t01} give the required result.
\end{proof}
\begin{theorem} \label{t02}
	Let $K$ be a Urysohn integral operator defined as \eqref{Eq: 01} with Green's function type kernel and $f \in C^{2r + 1}[0,1]$ be a given function. Consider the equation \eqref{Eq: 02} with $\varphi$ as the unique solution in $\mathcal{B}(\varphi, \delta_0)$. Let $Q_n$ be the interpolatory operator onto the approximating space $\mathcal{X}_n$ defined by \eqref{Eq: 06}. Let $\widetilde{\varphi}_n^M \in \mathcal{B}(\varphi, \delta_0)$ be the iterated modified collocation solution defined by \eqref{Eq: 023}. Then
	\begin{eqnarray*}
		\norm{\varphi - \widetilde{\varphi}_n^M}_\infty =  \begin{cases}
			O\left( h^{4} \right), ~~ & r = 0, \\
			O\left( h^{2r+3} \right), ~~ & r \geq 1.
		\end{cases}.
	\end{eqnarray*}
\end{theorem}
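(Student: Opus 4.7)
The plan is to begin from the identity $\varphi - \widetilde{\varphi}_n^M = K(\varphi) - K(\varphi_n^M)$, which follows directly from $\varphi = K(\varphi) + f$ and \eqref{Eq: 023}. A second-order Taylor expansion of $K$ about $\varphi$ gives
$$K(\varphi) - K(\varphi_n^M) = K'(\varphi)(\varphi - \varphi_n^M) + R_1,$$
where $\norm{R_1}_\infty \le C \norm{\varphi - \varphi_n^M}_\infty^2 = O(h^{4r+4})$ for $r \ge 1$ by Theorem \ref{t01} and the uniform bound on $\norm{K''}$ coming from assumption~(1) on the kernel. Hence
$$\norm{\varphi - \widetilde{\varphi}_n^M}_\infty \le \norm{K'(\varphi)(\varphi - \varphi_n^M)}_\infty + O(h^{4r+4}),$$
and the problem reduces to estimating $\norm{K'(\varphi)(\varphi - \varphi_n^M)}_\infty$.

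To handle that quantity I would produce an identity for $\varphi - \varphi_n^M$ in a form matching the two estimates of Lemmas \ref{l05} and \ref{l06}. Starting from \eqref{Eq: 022}, I write $\varphi - \varphi_n^M = K(\varphi) - K_n^M(\varphi_n^M)$, add and subtract $K_n^M(\varphi)$, and Taylor expand $K_n^M$ about $\varphi$ to obtain
$$\varphi - \varphi_n^M = \left[K(\varphi) - K_n^M(\varphi)\right] + (K_n^M)'(\varphi)(\varphi - \varphi_n^M) + R_2,$$
where $\norm{R_2}_\infty = O(\norm{\varphi - \varphi_n^M}_\infty^2) = O(h^{4r+4})$; here the bound on $R_2$ follows from the definition $K_n^M(x) = Q_n K(x) + K(Q_n x) - Q_n K(Q_n x)$, the uniform boundedness of $\{Q_n\}$, and \eqref{Eq: 05}. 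Rearranging yields
$$(I - K'(\varphi))(\varphi - \varphi_n^M) = \left[K(\varphi) - K_n^M(\varphi)\right] + \big((K_n^M)'(\varphi) - K'(\varphi)\big)(\varphi - \varphi_n^M) + R_2.$$

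Since $1$ is not an eigenvalue of $K'(\varphi)$, $(I - K'(\varphi))^{-1}$ is bounded. Applying it on the left, using $K'(\varphi)(I - K'(\varphi))^{-1} = (I - K'(\varphi))^{-1}K'(\varphi)$, and the triangle inequality, I obtain
$$\norm{K'(\varphi)(\varphi - \varphi_n^M)}_\infty \le \norm{(I-K'(\varphi))^{-1}}\,\Big(\norm{K'(\varphi)\big[K(\varphi) - K_n^M(\varphi)\big]}_\infty + \norm{K'(\varphi)\big((K_n^M)'(\varphi) - K'(\varphi)\big)(\varphi - \varphi_n^M)}_\infty + \norm{K'(\varphi) R_2}_\infty\Big).$$
Lemma \ref{l05} bounds the first summand by $O(h^{2r+3})$, Lemma \ref{l06} bounds the second by $O(h^{2r+4})$, and $\norm{K'(\varphi) R_2}_\infty = O(h^{4r+4})$ from the boundedness of $K'(\varphi)$. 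Thus $\norm{K'(\varphi)(\varphi - \varphi_n^M)}_\infty = O(h^{2r+3})$, and combining with the first-step reduction gives $\norm{\varphi - \widetilde{\varphi}_n^M}_\infty = O(h^{2r+3})$ for $r \ge 1$. The case $r = 0$ falls outside the hypotheses of Lemmas \ref{l03}--\ref{l06} and would be handled by citing the result in \cite{RPKTJN}, as was done in Theorem \ref{t01}.

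The main obstacle I anticipate is that the naive bound on the Newton-correction term, $\norm{(K_n^M)'(\varphi) - K'(\varphi)} \cdot \norm{\varphi - \varphi_n^M}_\infty$, is a factor of $h$ too coarse; the leading $K'(\varphi)$ must be kept together with the operator difference in order to exploit the cancellation across the diagonal $s = t$ where $\partial_s\ell$ and $\partial_t\ell$ have jumps. This is precisely the content of Lemma \ref{l06}, whose proof relies in turn on Proposition \ref{p03} and Remark \ref{r01}. Without this smoothing step one would only recover $O(h^{2r+2})$, merely matching the order of the modified collocation solution in Theorem \ref{t01}.
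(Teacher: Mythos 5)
Your proposal is correct and follows essentially the same route as the paper: reduce to $\norm{K'(\varphi)(\varphi-\varphi_n^M)}_\infty$ via a quadratic Taylor remainder (the paper's \eqref{Eq: 037}), then decompose $K'(\varphi)(\varphi-\varphi_n^M)$ through $(I-K'(\varphi))^{-1}$ into exactly the three terms of \eqref{Eq: 038}, bounded by Lemma \ref{l05}, Lemma \ref{l06}, and the $O(h^{4r+4})$ quadratic remainder, with the $r=0$ case deferred to \cite{RPKTJN}. The only cosmetic difference is that you derive the decomposition identity and the bound on $R_2$ directly rather than quoting Grammont et al.\ \cite{LGRPK2}, and your closing remark correctly identifies why $K'(\varphi)$ must be kept attached to $(K_n^M)'(\varphi)-K'(\varphi)$.
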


\begin{proof} 
	Case I: When $r = 0$. See \cite{RPKTJN}.\\
	Case II: When $r \geq 1$. \\
	We have $\varphi =K(\varphi)+f$ and $\widetilde{\varphi}_n^M=K(\varphi_n^M)+f$, therefore
	$\widetilde{\varphi}_n^M-\varphi=K(\varphi_n^M)-K(\varphi).$
	From Grammont et al \cite{LGRPK2}, we also have
	\begin{equation} \label{Eq: 037}
		\norm{\widetilde{\varphi}_n^M- \varphi}_\infty \le \norm{K'(\varphi)(\varphi_n^M-\varphi)}_\infty + C_1 \norm{\varphi_n^M -\varphi}_\infty^2.
	\end{equation}
	Now,
		\begin{align} \label{Eq: 038}
			K'(\varphi)(\varphi_n^{M} - \varphi)  &= -(I-K'(\varphi))^{-1} K'(\varphi)\big[ K(\varphi) - K_n^M(\varphi)\big] \notag \\ 
			&\quad + (I-K'(\varphi))^{-1} K'(\varphi)\big[K_n^M(\varphi_n^{M}) -K_n^M(\varphi) - (K_n^M)'(\varphi)(\varphi_n^M -\varphi) \big] \notag \\ 
			& \quad + (I-K'(\varphi))^{-1} K'(\varphi)\big[((K_n^M)'(\varphi) - K'(\varphi))(\varphi_n^M -\varphi) \big].
		\end{align}
	Bound for the second term of the above equation is obtained using Grammont et al \cite[Lemma 3.3]{LGRPK2} as
	$$
	\norm{K_n^M(\varphi_n^{M}) -K_n^M(\varphi) - (K_n^M)'(\varphi)(\varphi_n^M -\varphi)}_\infty = O\left(\norm{\varphi_n^{M}- \varphi}_\infty^2\right) = O\left( h^{4r+4} \right).
	$$
	By Lemma \ref{l05} and Lemma \ref{l06}, we obtained the the first and third terms of \eqref{Eq: 038} are of the orders $h^{2r+3}$ and $h^{2r+4}$, respectively. Thus,
	\begin{equation*} 
		\norm{ K'(\varphi)(\varphi_n^{M}- \varphi)}_\infty = O\left( h^{2r+3} \right).
	\end{equation*}
	It follows from \eqref{Eq: 037} and the above estimate that
	\begin{eqnarray*}
		\norm{ \varphi - \widetilde{\varphi}_n^M}_\infty =  O\left( h^{2r+3} \right),
	\end{eqnarray*}
	which completes the proof.
	
\end{proof}
\begin{note} \label{n3}
	Let $K$ be a Urysohn integral operator defined as \eqref{Eq: 01} with a smooth kernel such that $\ell_{*} \in C^{4r +2}([0,1] \times [0,1])$ and $Q_n$ be the interpolatory operator onto the approximating space $\mathcal{X}_n$ defined by \eqref{Eq: 06}. If $f \in C^{4r}[0,1]$, then by Rane \cite{RANE}
	\begin{equation*}
		\norm{K'(\varphi)(I - Q_n) K'(\varphi) (I -Q_n)\varphi}_\infty = O\left(h^{4r+4} \right).
	\end{equation*}
	From Grammont et al \cite[Lemma 3.1]{LGRPK2}, we get
	\begin{equation*}
		\norm{K'(\varphi)(I-Q_n) \big [K(Q_n \varphi)-K(\varphi)-K'(\varphi)(Q_n \varphi -\varphi)\big]}_\infty =  O\left( h^{8r+4} \right).
	\end{equation*}
	Then, using the above two estimates in \eqref{Eq: 035}, we obtain
	\begin{equation} \label{Eq: 039}
		\norm{K'(\varphi) \big[K(\varphi)-K_n^{M}(\varphi)\big]}_\infty =  O\left( h^{4r + 4} \right).
	\end{equation}
	Also, from Grammont et al \cite[Lemma 3.3]{LGRPK2}, it can be shown that
	\begin{equation} \label{Eq: 040}
		\norm{K_n^M(\varphi_n^{M}) -K_n^M(\varphi) - (K_n^M)'(\varphi)(\varphi_n^M -\varphi)}_\infty = O\left( h^{8r+6} \right).
	\end{equation}
	Using the Theorem 4.2 of Kulkarni-Ganneshwar \cite{RPKGN},  it can be shown that
	\begin{equation} \label{Eq: 041}
		\norm{K'(\varphi) \left((K_n^{M})'(\varphi)-K'(\varphi) \right) \left( \varphi-\varphi_n^{M} \right)}_\infty =  O\left( h^{6r+4} \right).
	\end{equation}
	Therefore, using the equations \eqref{Eq: 039}, \eqref{Eq: 040} and \eqref{Eq: 041} in \eqref{Eq: 038} first and subsequently to \eqref{Eq: 037}, we obtain
	\begin{equation*}
		\norm{\varphi - \widetilde{\varphi}_n^M }_\infty =  O\left( h^{4r+4} \right).
	\end{equation*}
\end{note}


\section{Numerical Results}

For illustration, we consider two examples: one with a smooth kernel and the other with a non-smooth kernel. These numerical examples verify the convergence rates obtained in this article. Consider the uniform partition of $[0,1]$ as
 $\left\{ 0 =t_0 < t_1 < \cdots <t_n=1 \right\}$, where $t_j = \frac{j}{n}, \quad j=1,2, \ldots, n.$ 

\begin{example}
	Consider the following nonlinear integral equation 
	\begin{equation*}
		x  (s) - \int_0^1 \frac{1}{s + t + x(t)} ~dt  =f(s), \;\;\; 0 \leq s \leq 1,
	\end{equation*}
	where we choose $f$ such that
	$\displaystyle{\varphi(s) = \frac{1}{s+1}}$
	is the exact solution.
\end{example}	
Let $\mathcal{X}_n$ denote the space of piecewise constant functions associated with the uniform partition of $[0,1]$, and let $Q_n: L^\infty[0,1] \mapsto \mathcal{X}_n$ be the interpolatory projection defined at the midpoints, as given by \eqref{Eq: 06}. It is observed that the iterated collocation method achieves a convergence rate of $O(1/n^2)$, while the iterated modified collocation method attains a significantly higher rate of $O(1/n^4)$. We replace all the integrals appeared in the above methods by the composite 2-point Gaussian quadrature rule. In the tables presented, $\delta_C$, $\delta_S$, $\delta_M$, and $\delta_{IM}$ denote the computed orders of convergence corresponding to the collocation, iterated collocation, modified collocation, and iterated modified collocation methods, respectively.
\begin{table}[h!] 
	\centering
	\caption{\footnotesize Comparison of convergence rates for variants of collocation methods in the case of smooth kernel}
	\label{table: 01}
	\begin{tabular}{ |c| c c|c c|c c| c c| } \hline
		$n$ & $\|\varphi - \varphi_n^C \|_\infty$ & $\delta_C$ & $\|\varphi - \varphi_n^S \|_\infty$ & $\delta_S$ & $\|\varphi - \varphi_n^M \|_\infty$ & $\delta_M$ & $\|\varphi - \widetilde{\varphi}_n^M \|_\infty$ & $\delta_{IM}$ \\
		\hline
		2  & $1.93 \times 10^{-1}$ &           & $1.27 \times 10^{-2}$ &        & $6.92 \times 10^{-3}$ &           & $3.30 \times 10^{-4}$ &        \\
		4  & $1.08 \times 10^{-1}$ & $0.84$    & $3.15 \times 10^{-3}$ & $2.01$ & $1.02 \times 10^{-3}$ & $2.76$    & $2.13 \times 10^{-5}$ & $3.95$ \\
		8  & $5.73 \times 10^{-2}$ & $0.91$    & $7.86 \times 10^{-4}$ & $2.00$ & $1.40 \times 10^{-4}$ & $2.87$    & $1.34 \times 10^{-6}$ & $3.99$\\
		16 & $2.93 \times 10^{-2}$ & $0.97$    & $1.96 \times 10^{-4}$ & $2.00$ & $1.82 \times 10^{-5}$ & $2.94$    & $8.37 \times 10^{-8}$ & $4.00$\\
		32 & $1.45 \times 10^{-2}$ & $1.01$    & $4.91 \times 10^{-5}$ & $2.00$ & $2.27 \times 10^{-6}$ & $3.00$    & $5.23 \times 10^{-9}$ & $4.00$\\
		\hline
	\end{tabular}
\end{table}

As shown in Table \ref{table: 01}, under smooth kernel conditions, the computed orders of convergence for both collocation and iterated collocation solutions match well with the theoretical findings (see Note \ref{n01}). Similarly, this example confirms that the computed orders for modified collocation and iterated modified collocation solutions also match the theoretical results (see Note \ref{n02} and Note \ref{n3}), thereby validating the effectiveness of higher order collocation methods and one step iteration in achieving accelerated convergence.

\begin{example}
	Consider
	\begin{equation*}
		x  (s) - \int_0^1 \kappa (s, t) \left [ g (t, x (t) \right ] d t  = \int_0^1 \kappa (s, t) f (t) d t, \;\;\; 0 \leq s \leq 1,
	\end{equation*}
	where 
	\[\displaystyle{\kappa (s,t) = \left\{ {\begin{array}{ll}
				s ( 1 - t), & \quad 0 \leq s \leq t , \\
				(1 - s) t, & \quad  t \leq s \leq 1,
		\end{array}}\right.}	
	\quad \text{and} \quad \displaystyle{g (t, u) = \frac {1} {1 + t + u}} 
	\]
	with $f (t)$ so chosen that
	$ \displaystyle{\varphi (s) = \frac { s (1 - s)} { s + 1}}$
	is the exact solution. \\This integral equation arises from the boundary value problem (see \cite{KEA1})
	\[
	x''(s) = g(s, x(s)) + f(s), \quad 0 < s < 1, \quad x(0) = x(1) = 0.
	\]
\end{example}	
Nonlinear integral equations of this type frequently arise in the modeling of complex physical phenomena and in the reformulation of boundary value problems with nonlinear boundary conditions associated with differential equations (see \cite{KEA0, FPZ, Penn}).
Let \( \mathcal{X}_n \) denote the space of piecewise constant functions defined with respect to the given uniform partition of \([0, 1]\). Define the interpolatory projection operator \( Q_n: L^\infty[0, 1] \mapsto \mathcal{X}_n \) as in equation~\eqref{Eq: 06}, where the projection is taken at the midpoints of the subintervals. For numerical integration, we use the composite $2$-point Gauss quadrature rule corresponding to the same partition. To obtain the order convergence of $\widetilde{\varphi}_n^M$ as \(O(1/n^4) \), we apply composite 2-point Gauss quadrature rule with \( n^2 \) subintervals.
\begin{table}[h!] 
	\centering
	\caption{\footnotesize Comparison of convergence rates for variants of collocation methods in the case of Green's kernel}
	\label{table: 02}
	\begin{tabular}{ |c| c c|c c|c c| c c| } \hline
		$n$ & $\|\varphi - \varphi_n^C \|_\infty$ & $\delta_C$ & $\|\varphi - \varphi_n^S \|_\infty$ & $\delta_S$ & $\|\varphi - \varphi_n^M \|_\infty$ & $\delta_M$ & $\|\varphi - \widetilde{\varphi}_n^M \|_\infty$ & $\delta_{IM}$ \\
		\hline
		2  & $1.50 \times 10^{-1}$ &        & $1.30 \times 10^{-3}$ &        & $1.31 \times 10^{-3}$ &        & $1.31 \times 10^{-3}$ &         \\
		4  & $9.54 \times 10^{-2}$ & $0.65$ & $2.31 \times 10^{-4}$ & $2.49$ & $1.68 \times 10^{-4}$ & $2.97$ & $7.77 \times 10^{-5}$ & $4.07$  \\
		6  & $6.87 \times 10^{-2}$ & $0.81$ & $1.02 \times 10^{-4}$ & $2.01$ & $5.71 \times 10^{-5}$ & $2.66$ & $1.47 \times 10^{-5}$ & $4.10$  \\
		8  & $5.34 \times 10^{-2}$ & $0.88$ & $5.81 \times 10^{-5}$ & $1.95$ & $2.62 \times 10^{-5}$ & $2.71$ & $4.76 \times 10^{-6}$ & $3.92$ \\
		10 & $4.35 \times 10^{-2}$ & $0.92$ & $3.67 \times 10^{-5}$ & $2.07$ & $1.37 \times 10^{-5}$ & $2.90$ & $1.87 \times 10^{-6}$ & $4.19$ \\
		12 & $3.66 \times 10^{-2}$ & $0.95$ & $2.59 \times 10^{-5}$ & $1.90$ & $8.22 \times 10^{-6}$ & $2.80$ & $9.52 \times 10^{-7}$ & $3.69$\\
		\hline
	\end{tabular}
\end{table}
\\
In Table \ref{table: 02}, a numerical example with Green's function-type kernel demonstrates that iterations improve the orders of convergence for both collocation and modified collocation solutions. In the collocation method improving from approximately first order to second order convergence upon iteration, and the modified collocation solution enhancing from third to nearly fourth order accuracy. This highlights the robustness of the iterated approaches, particularly the iterated modified collocation method, which maintains high convergence rates even in the presence of reduced kernel smoothness due to discontinuities along the diagonal.

\section{Conclusion}
We study the collocation method and its variants for the approximate solutions of Urysohn integral equations with smooth and Green’s function-type kernels. The projection used is an interpolatory projection at $2r+1$ collocation points defined on each subinterval with respect to a partition of $[0,1]$. Previous studies have shown that selecting Gauss points (i.e., the zeros of Legendre polynomials) as collocation points enhances the convergence rates of iterative methods. Usually, if the collocation points are not Gauss points (or not zeros of special functions), the solutions $\varphi_n^C$ (collocation solution), $\varphi_n^S$ (the iterated collocation solution), $\varphi_{n}^{m}$ (modified collocation solution) and $\widetilde{\varphi}_n^M$ (iterated modified collocation solution) exhibit the same orders of convergence, regardless of whether the kernel is smooth or of Green's function type.

For example, consider $\mathcal{X} = L^\infty [0, 1]$ and $\mathcal{X}_n$ be the space of piecewise constant functions on the uniform partition of $[0,1]$, given by $\{0 = t_0 < t_1 < \cdots < t_n = 1\}$, where $t_j = \frac{j}{n}$ for $j = 0,1, \ldots, n$. Let $Q_n : C[0, 1] \mapsto \mathcal{X}_n$ be the interpolatory map at collocation the points $ \tau_j = \frac{3j-2}{3n}$,  for $j = 1, \ldots, n$. Using the Hahn-Banach extension theorem, the domain of definition of $Q_n$ can be extended to $L^\infty [0, 1]$ and so $Q_n$ is a projection. Consider a nonlinear integral equation as
$ x - K(x) = f$, where $f(s) = s$, $s\in [0, 1]$, and 
\[
K(x)(s) = \left( \int_0^1 x(t) ~ dt + 1 \right) s, \quad s \in [0, 1].
\] 
We have $\varphi(s) = 4s$ for $s \in [0, 1]$ as the exact solution of this equation. It is evident that the operator $K$ is compact and Fr\'echet differentiable, with 
\[
\left(K'(\varphi)v\right)(s) = \left( \int_0^1 v(t) ~ dt \right) s, \quad \text{ for all } v \in \mathcal{X}, \;  s \in [0, 1].
\]
Note that $1$ is not an eigenvalue of $K'(\varphi)$.
Observe that
\[
(I - Q_n) \varphi(s)= 4s - \frac{4(3j-2)}{3n}, \quad s \in  [t_{j-1}, t_{j} ],
\]
and 
\[ \int_0^1 (I -Q_n) \varphi(t) ~dt =\sum_{j=1}^{n} ~ \int_{t_{j-1}}^{t_j} \left(4t -\frac{4(3j-2)}{3n} \right) ~ dt = \frac{2}{3n}.
\]
We get
\[
\norm{K'(\varphi)(I -Q_n) \varphi }_\infty =  \max_{s\in [0,1]} \left| \left( \int_0^1 (I -Q_n) \varphi(t) ~dt \right) s \right| = \frac{2}{3n},
\]
and
\[
\norm{(I - Q_n ) \varphi}_\infty = \max_{1 \le j \le n} \left\{ \max_{s \in  [t_{j-1}, t_{j} ]} |(I - Q_n ) \varphi(s) | \right\} \le \frac{8}{3n}.
\]
In general, the iterated collocation solution ia superconvergent (see \cite{KEA0}) if and only if 
\[
\frac{\norm{K'(\varphi)(I -Q_n) \varphi }_\infty }{ \norm{(I - Q_n ) \varphi}_\infty} \to 0 \; \text{as} \; n \to \infty.
\]
In this case, the above condition is not satisfied, because
\[
\frac{\norm{K'(\varphi)(I -Q_n) \varphi }_\infty }{ \norm{(I - Q_n ) \varphi}_\infty} \ge \frac{1}{4}.
\] 

In this paper, we consider a sequence of interpolatory projections at $2r + 1$ equidistant collocation points (not necessarily Gauss points) onto the space of piecewise polynomials of even degree $\le 2r$ with respect to a uniform partition $\{0 = t_0 < t_1 < \cdots < t_n = 1\}$, where $t_j = \frac{j}{n} = j h$. The key findings are as follows: \\For smooth kernels, the orders of convergence for the collocation, iterated collocation, modified collocation, and iterated modified collocation methods are $h^{2r+1}$, $h^{2r+2}$, $h^{4r+3}$ and $h^{4r+4}$, respectively. In contrast, for Green’s function-type kernels, the respective convergence orders are $h^{2r+1}$, $h^{2r+2}$, $h^{2r+2}$ and $h^{2r+3}$, respectively. These findings clearly demonstrate that the iterated modified collocation method achieves the fastest convergence, as the sequence $\{ \widetilde{\varphi}_n^M \}$ approaches the exact solution $\varphi$ more rapidly than the other approximations, verified by the numerical results.
In future, we want to employ these methods to integral equations with singular or weakly singular kernels, developing asymptotic series expansions for iterated solutions, and generalizing the theoretical framework and numerical schemes to higher-dimensional problems such as boundary integral equations. 

	\section*{Declarations}
	The authors have no conflict of interest to declare.
	


	
	
\end{document}